\documentclass[12pt]{article}

\usepackage[english]{babel}

\usepackage[letterpaper,top=2cm,bottom=2cm,left=3cm,right=2cm,marginparwidth=1.75cm]{geometry}
\usepackage{tikz}
\usepackage{amsmath,float,bm}
\usepackage{natbib}
\usepackage[colorlinks=true,
linkcolor=blue,
citecolor=blue,
urlcolor=blue]{hyperref}
\usepackage{pgfplots}
\usepgfplotslibrary{fillbetween}
\usepackage{amsmath,amssymb}
\pgfplotsset{compat=1.16}

\usepackage{titlesec}

\usepackage{amsmath,amsfonts,amsthm,mathrsfs,dsfont}
\usepackage{graphicx,color}
\usepackage{authblk}

\newtheorem{theorem}{Theorem}
\newtheoremstyle{normalremark}
  {3pt}   
  {3pt}   
  {\normalfont} 
  {}      
  {
  \bfseries} 
  {.}     
  {.5em}  
  {}      

\theoremstyle{normalremark}
\newtheorem{remark}{Remark}
\newtheorem{definition}{Definition}
\newtheorem{proposition}{Proposition}
\newtheorem{corollary}{Corollary}
\newtheorem{example}{Example}

\title{ \LARGE
\bf Unified formulas for conditional quantities and transportation functionals}

\author[1]{Roberto Vila\footnote{Corresponding author. Roberto Vila, e-mail: rovig161@gmail.com}}
\affil[1]{\it\small Department of Statistics, University of Brasilia, Brasilia, Brazil}
\author[1]{Eduardo Nakano}
\author[2]{Chang C. Y. Dorea}
\affil[2]{\it\small Department of Mathematics, University of Brasilia, Brasilia, Brazil}

\begin{document}
\maketitle

\begin{abstract}
This paper develops a unified probabilistic framework based on distributional derivatives and Dirac delta representations for the analysis of conditional and transportation-related quantities. General identities are established for arbitrary random variables, encompassing absolutely continuous, discrete, and mixed distributions. The proposed approach yields unified formulas for conditional expectations, conditional distributions, hazard functions, and improper distributions, revealing a common localization mechanism underlying these classical concepts.

The framework is further combined with copula methods to investigate transportation and dispersion functionals through dependence structures. Exploiting the extremal properties of the Fréchet--Hoeffding bounds together with expectation inequalities induced by $\Delta$-antitonic functions, sharp bounds are derived for absolute difference moments under fixed marginals. These results lead to concise derivations of quantile representations for the Wasserstein distance and a corresponding upper transportation functional, as well as survival-function representations and bounds for generalized absolute difference moments. As a particular case, new representations are obtained for the bivariate Gini mean difference and the associated bivariate Gini index.

Applications are given to Wasserstein-type functionals arising in the normal approximation of standardized counting distributions, including Poisson, Binomial, and Negative Binomial models, for which explicit quantile representations are derived. Overall, the results establish explicit links among conditional structures, dependence modeling, dispersion measures, normal approximation, and optimal transport, providing a unified perspective on several fundamental constructions in probability and mathematical statistics.

\end{abstract}

	\smallskip
\noindent
{\small {\bfseries Keywords.} {Conditional expectation; Dirac delta distribution; Copulas; Wasserstein distance; Transportation functionals.}}
\\
{\small{\bfseries Mathematics Subject Classification (2020).} {60E05; 60B05; 60E15.}}

{
\hypersetup{linkcolor=black}
\tableofcontents
}

\section{Introduction}

Conditional expectations, conditional distributions, and transportation functionals play fundamental roles in probability theory and mathematical statistics. Conditional distributions constitute one of the basic tools of statistical inference, Bayesian modeling, stochastic processes, and uncertainty quantification, whereas hazard functions are central to survival analysis, reliability theory, and event-history modeling. Transportation functionals, in turn, play a prominent role in optimal transport theory, distributional approximation, and robust statistical methods. Closely related quantities, such as Gini-type measures of dispersion and inequality, can also be expressed through absolute difference moments and admit natural interpretations in terms of dependence and transportation. Although these concepts are often studied in distinct contexts, many of their classical representations are connected through a common probabilistic structure.

The use of generalized functions in probability theory has a long history. In particular, the Dirac delta distribution provides a convenient framework for describing densities, point masses, and conditioning operations in a unified manner; see, for instance, \cite{Gelfand1964,Kanwal1998}. Distributional methods have proved useful in probability, stochastic processes, and mathematical physics, where weak derivatives frequently provide natural extensions of classical analytical constructions. Moreover, generalized-function techniques provide a natural language for describing singular probability measures and localization phenomena, both of which arise naturally in conditioning and transportation problems.

Conditional expectations given events of probability zero constitute a classical topic in measure-theoretic probability. Standard treatments are based on regular conditional probabilities and disintegration theory; see \cite{Billingsley1995,Kallenberg2002,Durrett2019}. In the present work, we adopt an alternative viewpoint based on limiting conditional events and distributional derivatives. This approach yields unified formulas valid for absolutely continuous, discrete, and mixed random variables, thereby revealing a common structure underlying several classical probabilistic quantities.

A second objective of the paper is to investigate transportation-type functionals through dependence structures. The connection between dependence and extremal expectations has been extensively studied in copula theory; see \cite{Nelsen2006,Joe2014}. In particular, the Fr\'echet--Hoeffding bounds play a fundamental role in identifying extremal couplings and deriving sharp inequalities under fixed marginals \cite{Genest1999,Lux2017}. These ideas naturally connect with optimal transport and Wasserstein-type distances, which have become fundamental tools in probability theory, statistics, data science, and machine learning, with applications ranging from distributional approximation and goodness-of-fit assessment to distributionally robust optimization; see \cite{Villani2003,Villani2009}. They also connect with dependence-based measures of dispersion, such as the bivariate Gini mean difference and the associated bivariate Gini index introduced in \cite{Capaldo2025}.

Motivated by these developments, we combine distributional arguments with copula-based extremal principles to derive sharp bounds for moments of the form $\mathbb E|X-Y|^r$ under prescribed marginals. As consequences, we obtain concise derivations of the classical quantile representation of the Wasserstein distance and of a corresponding upper transportation functional. We further derive survival-function representations and bounds for generalized absolute difference moments, yielding, as a particular case, explicit bounds for the bivariate Gini mean difference and the associated bivariate Gini index. In addition, the resulting quantile representations provide explicit formulas for Wasserstein-type functionals arising in the normal approximation of standardized counting distributions, including Poisson, Binomial, and Negative Binomial models. More broadly, the results establish explicit links among conditional structures, dependence modeling, dispersion measures, normal approximation, and transportation problems within a unified probabilistic framework.

The paper is organized as follows. Section~\ref{Distributional derivatives} reviews basic facts concerning distributional derivatives and Dirac delta distributions. Section~\ref{Unified formulas for statistical quantities} develops unified formulas for conditional expectations, conditional laws, hazard functions, and improper distributions. Section~\ref{Wasserstein distance formulas} establishes copula-based bounds for transportation functionals, derives quantile and survival-function representations for Wasserstein-type and related functionals, and presents applications to bivariate Gini measures and to the normal approximation of classical counting distributions. Section~\ref{Concluding remarks} concludes the paper with a discussion of the main results and possible directions for future research.


\section{Distributional derivatives}\label{Distributional derivatives}

This section presents the distributional tools used throughout the paper, including weak derivatives, Dirac delta distributions, and identities involving the Fr\'echet-Hoeffding bounds.

Let \(g:\mathbb{R}\to\mathbb{R}\) be a locally integrable function, that is,
$
g\in L_{\mathrm{loc}}^1(\mathbb{R}),
$
meaning that
$
\int_K |g(t)|{\rm d}t<\infty,
$
for every compact set \(K\subset\mathbb{R}\).

The distributional derivative (or weak derivative) of \(g\) is the distribution \(g'\) defined by
\[
\int_{-\infty}^{\infty}
g'(x)\varphi(x){\rm d}x
=
-
\int_{-\infty}^{\infty}
g(x)\varphi'(x){\rm d}x,
\]
for every test function
$
\varphi\in C_c^\infty(\mathbb{R}),
$
where \(C_c^\infty(\mathbb{R})\) denotes the space of infinitely differentiable functions with compact support.


More generally, the distributional derivative of order $n\in\mathbb{N}$ of $g$ is the distribution $g^{(n)}$ defined by
\[
\int_{-\infty}^{\infty}
g^{(n)}(x)\varphi(x){\rm d}x
=
(-1)^n
\int_{-\infty}^{\infty}
g(x)\varphi^{(n)}(x){\rm d}x,
\]
for every
$
\varphi\in C_c^\infty(\mathbb{R}).
$

Moreover, if $g$ is classically $n$-times differentiable and
$
g^{(n)}\in L_{\mathrm{loc}}^1(\mathbb{R}),
$
then the distributional derivative of order $n$ coincides with the classical derivative. Indeed, by repeated integration by parts,
\[
(-1)^n
\int_{-\infty}^{\infty}
g(x)\varphi^{(n)}(x){\rm d}x
=
\int_{-\infty}^{\infty}
g^{(n)}(x)\varphi(x){\rm d}x,
\]
since all boundary terms vanish due to the compact support of $\varphi$.
\begin{example}[Heaviside {step} function]\label{ex-1}
	Let
	\[
	H_{x_0}(x)=\mathds{1}_{\{x\geqslant x_0\}}.
	\]
	
	Then $H_{x_0}$ is not differentiable at $x_0$ in the classical sense; however, its distributional derivative is the Dirac delta distribution located at $x=x_0$, namely,
	\[
	H_{x_0}'=\delta_{x_0}.
	\]
	
	Indeed,
	\[
	\int_{-\infty}^{\infty}
	\delta_{x_0}(x)\varphi(x){\rm d}x
	=
	\varphi(x_0)
	=
	-
	\int_{x_0}^\infty \varphi'(x){\rm d}x.
	\]
\end{example}

\begin{proposition}\label{pro-main}
	The following identity holds in the distributional sense:
	%
	%
	\begin{align*}
		\lim_{\varepsilon \to 0^+}
		{ 
			H_{x_0}(x)
			-
			H_{x_0}(x-\varepsilon)
			\over 
			\varepsilon
		}
		=
		H_{x_0}'(x)
		=
		\delta_{x_0}(x).
	\end{align*}
\end{proposition}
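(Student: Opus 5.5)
The plan is to test the difference quotient against an arbitrary $\varphi\in C_c^\infty(\mathbb{R})$ and compute directly. The statement means that for every such $\varphi$,
\[
\lim_{\varepsilon\to0^+}\int_{-\infty}^{\infty}\frac{H_{x_0}(x)-H_{x_0}(x-\varepsilon)}{\varepsilon}\,\varphi(x)\,{\rm d}x=\varphi(x_0).
\]
The second equality, $H_{x_0}'=\delta_{x_0}$, is already established in Example~\ref{ex-1}, so only the limit needs proof.

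\textbf{Step 1: identify the quotient.} For $\varepsilon>0$ we have $H_{x_0}(x-\varepsilon)=\mathds{1}_{\{x\geqslant x_0+\varepsilon\}}$. Hence the numerator equals $\mathds{1}_{\{x_0\leqslant x<x_0+\varepsilon\}}$, and the quotient is the normalized indicator $\varepsilon^{-1}\mathds{1}_{[x_0,x_0+\varepsilon)}(x)$. This is a locally integrable function, so it defines a distribution. Pairing it with $\varphi$ gives
\[
\frac{1}{\varepsilon}\int_{x_0}^{x_0+\varepsilon}\varphi(x)\,{\rm d}x .
\]

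\textbf{Step 2: pass to the limit.} By continuity of $\varphi$ at $x_0$,
\[
\left|\frac{1}{\varepsilon}\int_{x_0}^{x_0+\varepsilon}\varphi(x)\,{\rm d}x-\varphi(x_0)\right|\leqslant \sup_{|x-x_0|\leqslant\varepsilon}|\varphi(x)-\varphi(x_0)|\to 0 .
\]
One could equally use the mean value theorem for integrals, or the bound $\varepsilon\|\varphi'\|_\infty/2$. This gives the limit $\varphi(x_0)=\int\delta_{x_0}\varphi$.

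\textbf{Alternative route.} A second argument avoids Step 1's explicit identification. Shift variables to get
\[
\int \frac{H_{x_0}(x)-H_{x_0}(x-\varepsilon)}{\varepsilon}\,\varphi(x)\,{\rm d}x=-\int H_{x_0}(x)\,\frac{\varphi(x+\varepsilon)-\varphi(x)}{\varepsilon}\,{\rm d}x .
\]
The difference quotient of $\varphi$ converges uniformly to $\varphi'$, with support in a fixed compact set, so the right-hand side tends to $-\int H_{x_0}\varphi'$. This equals $\varphi(x_0)$ by Example~\ref{ex-1}. This route also shows that the same argument works for any $g\in L^1_{\rm loc}$.

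There is no real obstacle here. The only point needing care is justifying the interchange of limit and integral. In the direct route this is just continuity of $\varphi$; in the alternative route it is uniform convergence on a fixed compact support.
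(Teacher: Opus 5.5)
Your proof is correct and is essentially the paper's: both reduce the pairing to the average $\varepsilon^{-1}\int_{x_0}^{x_0+\varepsilon}\varphi(x)\,{\rm d}x$ and conclude by continuity of $\varphi$ (the paper uses the mean value theorem for integrals). Your reading of the statement as a limit of pairings is the right one; the paper's appeal to dominated convergence to move the limit inside the integral does not actually work, since the quotient tends to $0$ for every $x\neq x_0$ and has no integrable majorant, and your shift argument is a nice bonus giving the same conclusion for any $g\in L^1_{\rm loc}(\mathbb{R})$.
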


\begin{proof}
	For all $\varphi\in C_c^\infty(\mathbb{R})$, we must prove that
	\begin{align*}
		\int_{-\infty}^\infty 
		\lim_{\varepsilon\to 0^+}
		{ 
			H_{x_0}(x)
			-
			H_{x_0}(x-\varepsilon)
			\over 
			\varepsilon
		}
		\, \varphi(x)
		{\rm d} x
		=
		-
		\int_{-\infty}^\infty 
		H_{x_0}(x)
		\varphi'(x)
		{\rm d} x.
	\end{align*}
	
	Indeed, note that
	\begin{align}\label{eq-lim}
		\int_{-\infty}^\infty 
		\lim_{\varepsilon\to 0^+}
		{ 
			H_{x_0}(x)
			-
			H_{x_0}(x-\varepsilon)
			\over 
			\varepsilon
		}
		\, \varphi(x)
		{\rm d} x
		=
		\lim_{\varepsilon\to 0^+}
		{1\over \varepsilon}
		\int^{x_0+\varepsilon}_{x_0}
		\varphi(x)
		{\rm d} x,
	\end{align}
	where the interchange between the limit and the integral follows from the dominated convergence theorem.
	
	Using the mean value theorem for integrals, there exists
	$
	x_0 \leqslant \xi \leqslant x_0+\varepsilon,
	$
	such that the right-hand side of \eqref{eq-lim} becomes
	\begin{align*}
		\lim_{\varepsilon\to 0^+} \varphi(\xi)
		=
		\varphi(x_0)
		=
		-
		\int_{x_0}^\infty \varphi'(x){\rm d}x,
		%
	\end{align*}
	where, in the first equality, we used the continuity of $\varphi$.
\end{proof}

\begin{proposition}\label{prop-cop}
	Let
	$
	W_2(u,v)=\max\left\{0,u+v-1\right\}
	$
	and
	$
	M_2(u,v)=\min\{u,v\}
	$, $(u,v)\in(0,1)^2$,
	denote the lower and upper Fr\'echet-Hoeffding bounds, respectively.
	Then
	\begin{align*}
		{\partial^2 M_2(u,v)\over \partial u\partial v}
		=
		\delta_u(v),
		\quad 
		{\partial^2 W_2(u,v)\over \partial u\partial v}
		=
		\delta_{1-u}(v).	
	\end{align*}
\end{proposition}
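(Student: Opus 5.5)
The plan is to compute one classical partial derivative and then take the remaining derivative in the distributional sense, reducing everything to Example~\ref{ex-1}.

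\textbf{Case $M_2$.} Fix $u\in(0,1)$ and view $M_2(u,\cdot)$ as a function of $v$. For $v\neq u$ it is classically differentiable in $u$, with
\[
\frac{\partial M_2(u,v)}{\partial u}=\mathds{1}_{\{u<v\}}=\mathds{1}_{\{v> u\}} .
\]
The function $u\mapsto\min\{u,v\}$ is Lipschitz, hence absolutely continuous. Its a.e.\ derivative $\mathds{1}_{\{u<v\}}$ is therefore also its distributional derivative in $u$. The value on the null set $\{v=u\}$ is irrelevant, so we may write this derivative as $H_u(v)$. Differentiating in $v$ and invoking Example~\ref{ex-1} (equivalently Proposition~\ref{pro-main}) with $x_0=u$ gives
\[
\frac{\partial}{\partial v}H_u(v)=\delta_u(v).
\]

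\textbf{Case $W_2$.} The argument is the same. For $v\neq 1-u$,
\[
\frac{\partial W_2(u,v)}{\partial u}=\mathds{1}_{\{u+v>1\}}=H_{1-u}(v) \quad\text{a.e.},
\]
again because $u\mapsto\max\{0,u+v-1\}$ is Lipschitz. Differentiating in $v$ then yields $\delta_{1-u}(v)$ by Example~\ref{ex-1} with $x_0=1-u$.

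\textbf{Main obstacle: rigor of the mixed derivative.} The point needing care is that these are genuine distributional identities in two variables. The claim is that for every $\varphi\in C_c^\infty((0,1)^2)$,
\[
\iint M_2\,\partial_u\partial_v\varphi \,{\rm d}u\,{\rm d}v=\int_0^1\varphi(u,u)\,{\rm d}u .
\]
This is the meaning of $\delta_u(v)$, namely the uniform measure on the diagonal. I would verify it by Fubini and two one-dimensional integrations by parts. First, for each fixed $v$, integrate by parts in $u$; this is legitimate because $M_2(\cdot,v)$ is absolutely continuous, and it produces $-\iint \mathds{1}_{\{u<v\}}\,\partial_v\varphi$. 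Then, for fixed $u$, compute $-\int_u^1\partial_v\varphi(u,v)\,{\rm d}v=\varphi(u,u)$ using the compact support of $\varphi$. The same computation for $W_2$ gives $\int_0^1\varphi(u,1-u)\,{\rm d}u$.
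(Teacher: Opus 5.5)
Your argument is correct, but it gets there by a different route from the paper. The paper never differentiates $M_2$ or $W_2$ classically. Instead it writes both as superpositions of Heaviside functions,
$M_2(u,v)=\int_{-\infty}^{\infty}H_0(x)H_x(u)H_x(v)\,{\rm d}x$ and $W_2(u,v)=\int_{-\infty}^{\infty}H_0(x)H_{x-v+1}(u)\,{\rm d}x$.
It then moves the distributional derivatives inside the $x$-integral and applies $H'=\delta$ together with sifting:
\begin{itemize}
\item for $M_2$ both derivatives are taken at once, giving $\int H_0(x)\delta_x(u)\delta_x(v)\,{\rm d}x$;
\item for $W_2$ the $u$-derivative collapses to $H_{1-u}(v)$, and the $v$-derivative is then exactly your step.
\end{itemize}
You instead obtain the first partial derivative directly, because $u\mapsto\min\{u,v\}$ and $u\mapsto\max\{0,u+v-1\}$ are Lipschitz. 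You then use Example~\ref{ex-1} only once, in $v$.

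The paper's representation fits its general theme of reducing everything to Heaviside functions and $H'=\delta$. It also displays $\partial_u\partial_v M_2$ visibly as a continuum of point masses along the diagonal. However, the interchange of distributional differentiation with the parametric integral, and the sifting of the product $\delta_x(u)\delta_x(v)$, are asserted rather than verified.

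Your version is more elementary and fully rigorous. The Fubini and integration-by-parts computation against $\varphi\in C_c^\infty((0,1)^2)$ identifies $\partial_u\partial_v M_2$ as the positive measure $\varphi\mapsto\int_0^1\varphi(u,u)\,{\rm d}u$. Likewise it identifies $\partial_u\partial_v W_2$ as $\varphi\mapsto\int_0^1\varphi(u,1-u)\,{\rm d}u$. These are the laws of $(U,U)$ and $(U,1-U)$. This identification is what the proof of Theorem~\ref{main-result} really uses, because there the integrand $f(F_X^{-1}(u),F_Y^{-1}(v))$ is not a test function. Once the mixed derivative is known to be a measure, it can legitimately be integrated against such non-smooth Borel integrands.
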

\begin{proof}
	Note that $M_2$ and $W_2$ admit the representations
	\begin{align*}
		M_2(u,v)
		=
		\int_0^\infty
		\mathds{1}_{\{u\geqslant x\}}
		\mathds{1}_{\{v\geqslant x\}}
		{\rm d}x 
		=
		\int_{-\infty}^{\infty}
		H_0(x)H_x(u)H_x(v)
		{\rm d}x,
	\end{align*}
	and
	\begin{align*}
		W_2(u,v)
		=
		\int_0^\infty
		\mathds{1}_{\{u\geqslant x-v+1\}}
		{\rm d}x 
		=
		\int_{-\infty}^{\infty}
		H_0(x)H_{x-v+1}(u)
		{\rm d}x,
	\end{align*}
	respectively, where $H_{x_0}$ denotes the Heaviside step function (Example~\ref{ex-1}).
	
	Since differentiation is understood in the distributional sense and the distributional derivative is a continuous linear operator, differentiation with respect to $u$ and $v$ can be interchanged with integration with respect to the parameter $x$. Therefore, using the identity $H_{x_0}'=\delta_{x_0}$ (Example~\ref{ex-1}), we obtain
	\begin{align*}
		\frac{\partial^2 M_2(u,v)}{\partial u\partial v}
		=
		\int_{-\infty}^{\infty}
		H_0(x)
		\delta_x(u)
		\delta_x(v)
		{\rm d}x.
	\end{align*}
	The first identity then follows immediately from the sifting property of the Dirac delta.
	
	For the second identity, differentiating $W_2$ with respect to $u$ yields
	\begin{align*}
		\frac{\partial W_2(u,v)}{\partial u}
		&=
		\int_{-\infty}^{\infty}
		H_0(x)
		\delta_{x-v+1}(u)
		{\rm d}x.
	\end{align*}
	Applying the sifting property of the Dirac delta, we obtain
	\begin{align*}
		\frac{\partial W_2(u,v)}{\partial u}
		=
		H_0(u+v-1)
		=
		H_{1-u}(v).
	\end{align*}
	Differentiating the above identity with respect to $v$ and using once again that $H_{x_0}'=\delta_{x_0}$, we arrive at
	\begin{align*}
		\frac{\partial^2 W_2(u,v)}{\partial u \partial v}
		=
		\delta_{1-u}(v).
	\end{align*}
	This completes the proof.
\end{proof}

\section{Unified formulas for statistical quantities}
\label{Unified formulas for statistical quantities}

This section derives unified formulas for conditional expectations, conditional laws, hazard functions, and improper distributions using distributional representations based on the Dirac delta distribution.

\begin{theorem}\label{prop-2}
	Let $g:\mathbb{R}^2\to\mathbb{R}$ be a Borel measurable function and $X$ and $Y$ random variables with joint cumulative distribution function $F_{X,Y}$ such that the composite random variable $g(X,Y)$ is integrable. Then, for a fixed number $t$, it holds that
	\begin{align*}
		\lim_{\Delta t\to 0^+}
		{
			\mathbb{E}[g(X,Y)\mathds{1}_{\{t\leqslant Y<t+\Delta t\}}]
			\over \Delta t}
		=
		\int_{-\infty}^{\infty}
		\int_{-\infty}^{\infty}
		\delta_t(y)
		g(x,y)
		{\rm d}F_{X,Y}(x,y),
	\end{align*}
	where $\delta_t$ is the Dirac delta distribution.
\end{theorem}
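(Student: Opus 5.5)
The plan is to write the indicator of the event $\{t\leqslant Y<t+\Delta t\}$ in terms of the Heaviside step function of Example~\ref{ex-1}, and then invoke Proposition~\ref{pro-main}. Observe that
\[
\mathds{1}_{\{t\leqslant y<t+\Delta t\}}
=
H_t(y)-H_{t+\Delta t}(y)
=
H_t(y)-H_t(y-\Delta t),
\]
since $H_{t+\Delta t}(y)=\mathds{1}_{\{y\geqslant t+\Delta t\}}=H_t(y-\Delta t)$. Hence, writing the expectation as a Lebesgue--Stieltjes integral against $F_{X,Y}$, we obtain
\begin{align*}
\frac{\mathbb{E}[g(X,Y)\mathds{1}_{\{t\leqslant Y<t+\Delta t\}}]}{\Delta t}
=
\int_{-\infty}^{\infty}\int_{-\infty}^{\infty}
\frac{H_t(y)-H_t(y-\Delta t)}{\Delta t}\,
g(x,y)\,{\rm d}F_{X,Y}(x,y).
\end{align*}
Integrability of $g(X,Y)$ guarantees that each of these integrals is finite for every $\Delta t>0$.

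The second step is to pass the limit $\Delta t\to0^+$ inside the double integral. By Proposition~\ref{pro-main}, the difference quotient $\{H_t(y)-H_t(y-\Delta t)\}/\Delta t$ converges to $\delta_t(y)$ in the distributional sense. Following the convention the paper adopts in the proof of Proposition~\ref{pro-main}, the limit is moved through the integral and the result is identified formally with $\int\int\delta_t(y)g(x,y)\,{\rm d}F_{X,Y}(x,y)$. This expression is in fact \emph{defined} as the limit of the left-hand side. In this sense the identity is the direct analogue of \eqref{eq-lim}, with the test function $\varphi$ replaced by the ``function'' $y\mapsto\int g(x,y)\,{\rm d}F_{X\mid Y}(x\mid y)$ integrated against the law of $Y$.

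The main obstacle is that $g$ is only measurable and integrable, not a test function, so Proposition~\ref{pro-main} does not literally apply. To make the step honest, I would disintegrate ${\rm d}F_{X,Y}$ and set $m(y)=\mathbb{E}[g(X,Y)\mid Y=y]$. The numerator then equals $\int_{[t,t+\Delta t)} m(y)\,{\rm d}F_Y(y)$. The right-hand side is to be read as the corresponding limit. That is, $\delta_t$ acts on the measure $m\,{\rm d}F_Y$ through the quotient limit, which yields $m(t)f_Y(t)$ when $m\,f_Y$ is continuous at $t$, and which diverges, consistently with a point mass, when $Y$ has an atom at $t$.

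I would therefore state that the equality holds in the generalized sense: whenever the left-hand limit exists, it defines the pairing on the right. In the regular case, $m f_Y$ continuous at $t$, I would verify agreement with the classical value by the same mean-value argument used in Proposition~\ref{pro-main}.
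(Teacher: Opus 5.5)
Your argument is the paper's own. You rewrite the indicator as $H_t(y)-H_t(y-\Delta t)$ inside the Lebesgue--Stieltjes integral, pass $\Delta t\to0^+$ through the integral, and invoke Proposition~\ref{pro-main}; the paper makes this same interchange without further justification. Your added caveat is accurate and more careful than the source: for merely measurable $g$ the right-hand side only has meaning as this limit, it equals $m(t)f_Y(t)$ in the regular case, and at an atom of $Y$ at $t$ the quotient blows up whenever $\mathbb{E}[g(X,Y)\mathds{1}_{\{Y=t\}}]\neq0$, which conflicts with the finite values the paper later assigns in Propositions~\ref{prop-disc-1} and~\ref{prop-disc-2}.
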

\begin{proof}
	Since expectation is defined as a Lebesgue-Stieltjes integral, we can write
	\begin{align*}
		\mathbb{E}[g(X,Y)\mathds{1}_{\{t\leqslant Y<t+\Delta t\}}]
		&=
		\int_{-\infty}^{\infty}
		\int_{-\infty}^{\infty}
		\mathds{1}_{\{t\leqslant y<t+\Delta t\}}
		g(x,y)
		{\rm d}F_{X,Y}(x,y)
		\\[0,2cm]
		&=
		\int_{-\infty}^{\infty}
		\int_{-\infty}^{\infty}
		\left[
		H_{t}(y)
		-
		H_{t}(y-\Delta t)
		\right]
		g(x,y)
		{\rm d}F_{X,Y}(x,y),
	\end{align*}
	where $H_t$ is the Heaviside {step} function defined in Example \ref{ex-1}. Dividing the expression by $\Delta t$ and then taking the limit as $\Delta t\to 0^+$, we obtain
	\begin{align*}
		&\lim_{\Delta t\to 0^+}
		{
			\mathbb{E}[g(X,Y)\mathds{1}_{\{t\leqslant Y<t+\Delta t\}}]
			\over \Delta t}
		\\[0,2cm]
		&=
		\int_{-\infty}^{\infty}
		\int_{-\infty}^{\infty}
		\left[
		\lim_{\Delta t\to 0^+}
		{
			H_{t}(y)
			-
			H_{t}(y-\Delta t)
			\over 
			\Delta t}
		\right]
		g(x,y)
		{\rm d}F_{X,Y}(x,y).
	\end{align*}
	Hence, applying Proposition \ref{pro-main} the proof follows.
\end{proof}


\begin{proposition}\label{prop-cont-1}
	If $X$ and $Y$ are absolutely continuous random variables with joint probability density function $f_{X,Y}$, then
	\begin{align*}
		\lim_{\Delta t\to 0^+}
		{
			\mathbb{E}[g(X,Y)\mathds{1}_{\{t\leqslant Y<t+\Delta t\}}]
			\over \Delta t}
		=
		\int_{-\infty}^{\infty}
		g(x,t) f_{X,Y}(x,t)
		{\rm d}x.
	\end{align*}
\end{proposition}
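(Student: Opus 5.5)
The plan is to specialize Theorem~\ref{prop-2} to the absolutely continuous case and then evaluate the Dirac delta by its sifting property. Since $(X,Y)$ has joint density $f_{X,Y}$, we have ${\rm d}F_{X,Y}(x,y)=f_{X,Y}(x,y)\,{\rm d}x\,{\rm d}y$. Theorem~\ref{prop-2} then gives
\begin{align*}
\lim_{\Delta t\to 0^+}
{\mathbb{E}[g(X,Y)\mathds{1}_{\{t\leqslant Y<t+\Delta t\}}]\over \Delta t}
=
\int_{-\infty}^{\infty}\left[\int_{-\infty}^{\infty}\delta_t(y)\,g(x,y)f_{X,Y}(x,y)\,{\rm d}y\right]{\rm d}x .
\end{align*}
Here I use Fubini to order the integrals so that the $y$-integration is innermost.

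Next I apply the sifting property $\int\delta_t(y)h(y)\,{\rm d}y=h(t)$ to the inner integral, with $h(y)=g(x,y)f_{X,Y}(x,y)$ for each fixed $x$. This yields $g(x,t)f_{X,Y}(x,t)$, and integrating in $x$ gives the claim.

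For a more concrete check that avoids formal delta calculus, I would also argue directly. Write
\begin{align*}
{1\over\Delta t}\,\mathbb{E}[g(X,Y)\mathds{1}_{\{t\leqslant Y<t+\Delta t\}}]
=
\int_{-\infty}^{\infty}{1\over\Delta t}\int_t^{t+\Delta t}g(x,y)f_{X,Y}(x,y)\,{\rm d}y\,{\rm d}x .
\end{align*}
As in Proposition~\ref{pro-main}, the mean value theorem for integrals (or the Lebesgue differentiation theorem) shows that the inner average tends to $g(x,t)f_{X,Y}(x,t)$. The limit is then passed through the $x$-integral by dominated convergence.

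The main obstacle is regularity. The sifting property requires $y\mapsto g(x,y)f_{X,Y}(x,y)$ to be continuous at $y=t$, or at least to make $t$ a Lebesgue point for a.e.\ $x$. Passing the limit under the $x$-integral also needs a dominating function, for example $\sup_{|y-t|\le\eta}|g(x,y)f_{X,Y}(x,y)|$ being integrable in $x$. I would state these conditions as the implicit standing assumptions, namely continuity in $y$ near $t$ plus local domination, matching the level of rigor of Proposition~\ref{pro-main} and Theorem~\ref{prop-2}. I would note that for merely integrable $g$ the identity holds for almost every $t$.
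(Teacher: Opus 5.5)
Your proposal is correct and follows the paper's proof: specialize Theorem~\ref{prop-2} using ${\rm d}F_{X,Y}=f_{X,Y}\,{\rm d}x\,{\rm d}y$, then apply the sifting property of $\delta_t$ in the $y$-variable. Your supplementary direct argument (averaging in $y$, then dominated convergence in $x$), together with the continuity and domination hypotheses you flag, is a worthwhile addition, since the paper leaves these regularity conditions implicit and without them the pointwise value $f_{X,Y}(x,t)$ is not even well defined.
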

\begin{proof}
	From Proposition \ref{prop-2}, we have
	\begin{align*}
		\lim_{\Delta t\to 0^+}
		{
			\mathbb{E}[g(X,Y)\mathds{1}_{\{t\leqslant Y<t+\Delta t\}}]
			\over \Delta t}
		=
		\int_{-\infty}^{\infty}
		\int_{-\infty}^{\infty}
		\delta_t(y)
		g(x,y) f_{X,Y}(x,y)
		{\rm d}x{\rm d}y.
	\end{align*}
	The proof follows from the sifting property of the Dirac delta.
\end{proof}

\begin{proposition}\label{prop-disc-1}
	If $X$ and $Y$ are discrete random variable assuming values in the set $\{(x_i,y_j):i\in I, j\in J\}$, where $I$ and $J$ are a countable index sets, then
	\begin{align*}
		\lim_{\Delta t\to 0^+}
		{
			\mathbb{E}[g(X,Y)\mathds{1}_{\{t\leqslant Y<t+\Delta t\}}]
			\over \Delta t}
		&=
		\sum_{i\in I} \sum_{j\in J} 
		g(x_i,y_j)
		\mathbb{P}(X=x_i, Y=y_j) \delta_{t}(y_j)
		\\[0,2cm]
		&=
		\begin{cases} 
			\displaystyle
			\sum_{i\in I} 
			g(x_i,t)
			\mathbb{P}(X=x_i, Y=t), & \text{if} \ t\in\text{Supp}(Y),
			\\[0,5cm]
			0, & \text{if} \ t\notin\text{Supp}(Y), 
		\end{cases}
	\end{align*}
\end{proposition}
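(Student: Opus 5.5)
The plan is to start from Theorem~\ref{prop-2}, which gives
\[
\lim_{\Delta t\to 0^+}\frac{\mathbb{E}[g(X,Y)\mathds{1}_{\{t\leqslant Y<t+\Delta t\}}]}{\Delta t}
=\int_{-\infty}^{\infty}\int_{-\infty}^{\infty}\delta_t(y)\,g(x,y)\,{\rm d}F_{X,Y}(x,y).
\]
Then I specialize the Lebesgue--Stieltjes integral to the discrete case. Since $(X,Y)$ is supported on the countable set $\{(x_i,y_j):i\in I,\ j\in J\}$, the measure ${\rm d}F_{X,Y}$ is the atomic measure $\sum_{i,j}\mathbb{P}(X=x_i,Y=y_j)\,\delta_{(x_i,y_j)}$. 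Integrating any integrand against it therefore reduces to a weighted sum over the atoms. Applying this to the integrand $\delta_t(y)g(x,y)$ gives the first displayed equality
\[
\sum_{i\in I}\sum_{j\in J} g(x_i,y_j)\,\mathbb{P}(X=x_i,Y=y_j)\,\delta_t(y_j).
\]
The integrability of $g(X,Y)$ justifies the countable rearrangement, because $\sum_{i,j}|g(x_i,y_j)|\,\mathbb{P}(X=x_i,Y=y_j)<\infty$.

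For the case split, I would read $\delta_t(y_j)$ at an atom as the indicator produced by the limiting procedure, not as a pointwise value of a distribution. The clean way to do this is to go back one step, before the limit is taken inside, and compute directly:
\[
\frac{1}{\Delta t}\,\mathbb{E}[g(X,Y)\mathds{1}_{\{t\leqslant Y<t+\Delta t\}}]
=\frac{1}{\Delta t}\sum_{i\in I}\sum_{j:\,t\leqslant y_j<t+\Delta t} g(x_i,y_j)\,\mathbb{P}(X=x_i,Y=y_j).
\]
As $\Delta t\to0^+$, the set $\{j: t\leqslant y_j<t+\Delta t\}$ shrinks to $\{j:y_j=t\}$. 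By dominated convergence over the counting measure, the inner sum tends to $\sum_i g(x_i,t)\,\mathbb{P}(X=x_i,Y=t)$, which is $0$ if $t\notin\text{Supp}(Y)$. This identifies $\delta_t(y_j)$ with the normalized indicator of $\{y_j=t\}$, matching the localization in Proposition~\ref{pro-main}, and it yields the two cases of the statement.

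The main obstacle is interpretive. For atoms the quotient by $\Delta t$ blows up when $t$ is itself an atom, so the ``limit'' is really the distributional identity and not a finite numerical limit. I would handle this by stating that, as in Proposition~\ref{prop-disc-1}, the right-hand side is to be understood via the sifting convention $\delta_t(y_j)\mapsto\mathds{1}_{\{y_j=t\}}$. That is, the case formula records the mass localized at $y=t$, consistent with the absolutely continuous analogue in Proposition~\ref{prop-cont-1}, where the sifting property replaces $y$ by $t$. I would not try to prove a numerical limit beyond this convention.
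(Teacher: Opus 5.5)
Your first equality follows the paper's route. Both apply Theorem~\ref{prop-2} and then collapse the Stieltjes integral against the discrete law onto the atoms. The paper writes $F_{X,Y}=\sum_{i,j}\mathbb{P}(X=x_i,Y=y_j)H_{x_i}(x)H_{y_j}(y)$, replaces ${\rm d}H_{x_i}(x)\,{\rm d}H_{y_j}(y)$ by $\delta_{x_i}(x)\delta_{y_j}(y)\,{\rm d}x\,{\rm d}y$, and sifts; your atomic-measure description is the same step. Both arguments are formal at the same point. The product $\delta_t(y)g(x,y)$ is not a function that can be evaluated at atoms, and your summability bound $\sum_{i,j}|g(x_i,y_j)|\,\mathbb{P}(X=x_i,Y=y_j)<\infty$ only controls the series once $\delta_t(y_j)$ is already known to be bounded. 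The paper obtains its case split by reading $\delta_t(y_j)$ as $\mathds{1}_{\{y_j=t\}}$ under the name ``sifting''. Your closing appeal to that convention is therefore exactly what the paper's proof rests on.

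Your ``clean way'' paragraph, however, does not justify that convention, because it computes a different limit. Dominated convergence gives $\mathbb{E}[g(X,Y)\mathds{1}_{\{t\leqslant Y<t+\Delta t\}}]\to\sum_i g(x_i,t)\,\mathbb{P}(X=x_i,Y=t)$ for the numerator. After dividing by $\Delta t$, the quotient diverges whenever this limit is nonzero: for $g\equiv1$ and an atom $t$, $\mathbb{P}(t\leqslant Y<t+\Delta t)/\Delta t\geqslant\mathbb{P}(Y=t)/\Delta t\to\infty$. The value $0$ for the quotient is justified when $t$ lies outside the closed support, since then the numerator vanishes identically for small $\Delta t$. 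A numerator that merely tends to $0$ is not enough: with $\mathbb{P}(Y=2^{-n})=2^{-n}$ for $n\geqslant1$, $g\equiv1$ and $t=0$, the quotient oscillates in $[1,2)$. Read as a pointwise limit, the statement therefore fails at atoms.

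You notice the blow-up in your last paragraph, but the remedy mixes two readings that do not fit together. The quotient does converge distributionally in the variable $t$. For $\varphi\in C_c^\infty(\mathbb{R})$, Fubini gives $\int\varphi(t)\,(\Delta t)^{-1}\mathbb{E}[g(X,Y)\mathds{1}_{\{t\leqslant Y<t+\Delta t\}}]\,{\rm d}t=\mathbb{E}\bigl[g(X,Y)\,(\Delta t)^{-1}\int_{Y-\Delta t}^{Y}\varphi(s)\,{\rm d}s\bigr]\to\mathbb{E}[g(X,Y)\varphi(Y)]$. This makes the first equality rigorous if $\delta_t(y_j)$ is read as $\delta_{y_j}(t)$. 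But the case function in the second line vanishes off a countable set, so as a distribution it is zero, not $\sum_{i,j}g(x_i,y_j)\mathbb{P}(X=x_i,Y=y_j)\delta_{y_j}$.

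What your computation proves rigorously is the un-normalized limit. That is all the discrete case of Subsection~\ref{Substitution_principle} needs: in the ratio $\mathbb{E}[g(X,Y)\mathds{1}_{\{t\leqslant Y<t+\Delta t\}}]/\mathbb{P}(t\leqslant Y<t+\Delta t)$, the numerator and denominator converge separately to $\sum_i g(x_i,t)\,\mathbb{P}(X=x_i,Y=t)$ and $\mathbb{P}(Y=t)>0$.
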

\noindent where $\text{Supp}(Y)$ denotes the support of $Y$.
\begin{proof}
	Let $F_{X,Y}$ denote the joint cumulative distribution function of $(X,Y)^\top$. Observe that $F_{X,Y}$ may be expressed as
	\begin{align*}
		F_{X,Y}(x,y)
		=
		\sum_{i\in I} 
		\sum_{j\in J} 
		\mathbb{P}(X=x_i, Y=y_j) H_{x_i}(x) H_{y_j}(y),
		\quad x,y\in\mathbb{R},
	\end{align*}
	where $H_{x_i}$ and $H_{y_j}$ are the Heaviside {step} functions defined in Example \ref{ex-1}.
	
	By Theorem \ref{prop-2} and the linearity of the Lebesgue-Stieltjes integral with respect to the integrator, we obtain
	\begin{multline*}
		\lim_{\Delta t\to 0^+}
		{
			\mathbb{E}[g(X,Y)\mathds{1}_{\{t\leqslant Y<t+\Delta t\}}]
			\over \Delta t}
		\\[0,2cm]
		=
		\sum_{i\in I} 
		\sum_{j\in J} 
		\mathbb{P}(X=x_i, Y=y_j)
		\int_{-\infty}^{\infty}
		\int_{-\infty}^{\infty}
		\delta_t(y)
		g(x,y)
		{\rm d}H_{x_i}(x) {\rm d}H_{y_j}(y).
	\end{multline*}
	Since $H_{x_i}'=\delta_{x_i}$ and $H_{y_j}'=\delta_{y_j}$ (Example \ref{ex-1}), the previous expression can be rewritten as
	\begin{align*}
		\sum_{i\in I} 
		\sum_{j\in J} 
		\mathbb{P}(X=x_i, Y=y_j)
		\int_{-\infty}^{\infty}
		\int_{-\infty}^{\infty}
		\delta_t(y) \delta_{y_j}(y) 
		\delta_{x_i}(x)
		g(x,y)
		{\rm d}x {\rm d}y.
	\end{align*}
	The conclusion follows by the sifting property of the Dirac delta.
\end{proof}

Considering $g(x,y)=1$ in Theorem \ref{prop-2} as the unitary function, we obtain {the following proposition}.  
\begin{proposition}\label{prop-3}
	It holds that
	\begin{align*}
		\lim_{\Delta t\to 0^+}
		{
			\mathbb{P}(t\leqslant Y<t+\Delta t)
			\over \Delta t}
		=
		\int_{-\infty}^{\infty}
		\delta_t(y)
		{\rm d}F_{Y}(t).
	\end{align*}
\end{proposition}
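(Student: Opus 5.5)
The statement is the specialization of Theorem~\ref{prop-2} to the constant function $g\equiv 1$, so the plan is to verify the hypotheses and then collapse the double integral to a single one. First, $g(x,y)=1$ is Borel measurable on $\mathbb{R}^2$, and $g(X,Y)=1$ is trivially integrable. Moreover, $\mathbb{E}[\mathds{1}_{\{t\leqslant Y<t+\Delta t\}}]=\mathbb{P}(t\leqslant Y<t+\Delta t)$. Theorem~\ref{prop-2} therefore gives
\[
\lim_{\Delta t\to 0^+}\frac{\mathbb{P}(t\leqslant Y<t+\Delta t)}{\Delta t}
=\int_{-\infty}^{\infty}\int_{-\infty}^{\infty}\delta_t(y)\,{\rm d}F_{X,Y}(x,y),
\]
where $X$ can be any auxiliary random variable. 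For instance, take $X=Y$ or $X$ constant, so that the joint distribution function exists.

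The second step removes the $x$-integration. The integrand $\delta_t(y)$ does not depend on $x$. So I will use Fubini's theorem for the Lebesgue--Stieltjes product structure, in the same formal sense already used in the proof of Theorem~\ref{prop-2}. Integrating out $x$ leaves the marginal measure ${\rm d}F_Y(y)$, since $F_Y(y)=\lim_{x\to\infty}F_{X,Y}(x,y)$. This yields $\int_{-\infty}^{\infty}\delta_t(y)\,{\rm d}F_Y(y)$, which is the claimed expression. The integration variable in the statement is written as $t$ in ${\rm d}F_Y(t)$, but it should read as $y$.

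An alternative route avoids $X$ altogether. Write $\mathbb{P}(t\leqslant Y<t+\Delta t)=\int [H_t(y)-H_t(y-\Delta t)]\,{\rm d}F_Y(y)$, divide by $\Delta t$, pass the limit inside as in the proof of Theorem~\ref{prop-2}, and apply Proposition~\ref{pro-main}.

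The main delicate point is the marginalization step, i.e.\ justifying that integrating a function of $y$ alone against ${\rm d}F_{X,Y}$ equals integrating it against ${\rm d}F_Y$ when the integrand is a distribution rather than a function. I would handle this by doing the marginalization before taking the limit, where the integrand $[H_t(y)-H_t(y-\Delta t)]/\Delta t$ is a bounded Borel function, so ordinary measure theory applies. The limit is then taken exactly as in Theorem~\ref{prop-2}, which is the alternative route above.
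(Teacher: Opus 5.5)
Your proposal is correct and follows the paper, which obtains this proposition simply by taking $g\equiv 1$ in Theorem~\ref{prop-2}. The paper leaves the passage from ${\rm d}F_{X,Y}$ to the marginal ${\rm d}F_Y$ implicit. Doing that marginalization before the limit, where it is just the image-measure identity for a bounded Borel integrand, is a sensible clarification and not a different route, and so is reading ${\rm d}F_Y(t)$ as ${\rm d}F_Y(y)$.
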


With Proposition \ref{prop-3} in hand, we are able to establish the following results.
\begin{proposition}\label{prop-cont-2}
	If $Y$ is an absolutely continuous random variable with probability density function $f_Y$, then
	\begin{align*}
		\lim_{\Delta t\to 0^+}
		{
			\mathbb{P}(t\leqslant Y<t+\Delta t)
			\over \Delta t}
		=
		f_{Y}(t),
		\quad t\in{\rm Supp}(Y).
	\end{align*}
\end{proposition}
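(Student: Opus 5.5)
The plan mirrors the argument of Proposition~\ref{prop-cont-1}, now in the one-dimensional setting of Proposition~\ref{prop-3}. First I invoke Proposition~\ref{prop-3}, which expresses the limit as $\int_{-\infty}^{\infty}\delta_t(y)\,{\rm d}F_Y(y)$. Since $Y$ is absolutely continuous with density $f_Y$, the Lebesgue--Stieltjes measure ${\rm d}F_Y(y)$ can be replaced by $f_Y(y)\,{\rm d}y$. The right-hand side then becomes $\int_{-\infty}^{\infty}\delta_t(y)f_Y(y)\,{\rm d}y$, and the sifting property of the Dirac delta yields $f_Y(t)$.

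The sifting step is only formally legitimate, because $f_Y$ is not a test function. To make it rigorous I would also run a direct check that bypasses the delta. Write
\[
\frac{\mathbb{P}(t\leqslant Y<t+\Delta t)}{\Delta t}=\frac{1}{\Delta t}\int_t^{t+\Delta t}f_Y(y)\,{\rm d}y .
\]
Then the Lebesgue differentiation theorem, or the mean value theorem for integrals as used in the proof of Proposition~\ref{pro-main}, gives convergence to $f_Y(t)$.

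The main obstacle is the regularity of $f_Y$ at $t$. A density is only defined almost everywhere, so the identity holds for almost every $t$, or for every $t$ at which $f_Y$ is right-continuous. I would therefore read the restriction $t\in{\rm Supp}(Y)$ as implicitly choosing the version of $f_Y$ that is continuous (at least from the right) at the points considered. Under that reading, the mean value argument closes the proof.
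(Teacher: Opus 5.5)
Your first paragraph is exactly the paper's proof: Proposition~\ref{prop-3}, then ${\rm d}F_Y(y)=f_Y(y)\,{\rm d}y$, then the sifting property.

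Your second route is genuinely different, and it is the one that actually proves something. Writing the ratio as $\frac{1}{\Delta t}\int_t^{t+\Delta t}f_Y(y)\,{\rm d}y$ and using Lebesgue differentiation avoids two formal steps: sifting against a non-test function, and the limit--integral interchange in Theorem~\ref{prop-2}. That interchange is not valid for a general measure. The same formal steps give $\mathbb{P}(Y=t)$ for a discrete $Y$, whereas the ratio actually diverges when $\mathbb{P}(Y=t)>0$. The paper's route buys brevity and a formal parallel with the discrete case; yours identifies the correct hypothesis.

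Note that this hypothesis is a genuine restriction, not merely a choice of version. For $f_Y(y)=\tfrac14|y|^{-1/2}$ on $[-1,1]$ and $t=0\in{\rm Supp}(Y)$, the ratio equals $1/(2\sqrt{\Delta t})\to\infty$. So the statement needs the qualifier \emph{for almost every $t$} or \emph{at right-continuity points of $f_Y$}.

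Finally, the mean value theorem for integrals needs continuity on $[t,t+\Delta t]$. Under right-continuity at $t$ alone, use instead
\[
\Bigl|\frac{1}{\Delta t}\int_t^{t+\Delta t}f_Y(y)\,{\rm d}y-f_Y(t)\Bigr|\leqslant\sup_{t\leqslant y\leqslant t+\Delta t}|f_Y(y)-f_Y(t)|\longrightarrow 0 .
\]
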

\begin{proof}
	By Proposition \ref{prop-3}, we obtain
	\begin{align*}
		\lim_{\Delta t\to 0^+}
		{
			\mathbb{P}(t\leqslant Y<t+\Delta t)
			\over \Delta t}
		=
		\int_{-\infty}^{\infty}
		\delta_t(y)
		f_{Y}(y)
		{\rm d}y.
	\end{align*}
	The result follows from the sifting property of the Dirac delta.
\end{proof}

\begin{proposition}\label{prop-disc-2}
	If $Y$ is a discrete random variable assuming values in the set $\{y_j:j\in J\}$, where $J$ is a countable index set, then
	\begin{align*}
		\lim_{\Delta t\to 0^+}
		{
			\mathbb{P}(t\leqslant Y<t+\Delta t)
			\over \Delta t}
		&=
		\sum_{j\in J}
		\mathbb{P}(Y=y_j)
		\delta_{t}(y_j)
		\\[0,2cm]
		&=
		\begin{cases} 
			\mathbb{P}(Y=t), & \text{if} \ t\in\text{Supp}(Y),
			\\[0,5cm]
			0, & \text{if} \ t\notin\text{Supp}(Y).
		\end{cases}
	\end{align*}
\end{proposition}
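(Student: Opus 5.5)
The plan is to mirror the proof of Proposition~\ref{prop-disc-1}, specialized to one dimension. Equivalently, I would take $g\equiv 1$ and $X$ degenerate, or start directly from Proposition~\ref{prop-3}.

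The first step writes the distribution function of $Y$ as a countable superposition of Heaviside step functions (Example~\ref{ex-1}):
\begin{align*}
F_Y(y)=\sum_{j\in J}\mathbb{P}(Y=y_j)\,H_{y_j}(y),\quad y\in\mathbb{R}.
\end{align*}
This series converges uniformly, since its coefficients are summable with sum one.

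Next, I insert this representation into the right-hand side of Proposition~\ref{prop-3}. By linearity of the Lebesgue--Stieltjes integral in the integrator, the integral becomes
\begin{align*}
\sum_{j\in J}\mathbb{P}(Y=y_j)\int_{-\infty}^{\infty}\delta_t(y)\,{\rm d}H_{y_j}(y).
\end{align*}
The interchange of sum and integral is the same one used in Proposition~\ref{prop-disc-1}. It is justified because the measure ${\rm d}F_Y$ is the countable sum of the point masses $\mathbb{P}(Y=y_j)\,{\rm d}H_{y_j}$, which is countable additivity.

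Using $H_{y_j}'=\delta_{y_j}$, each integral equals $\int\delta_t(y)\delta_{y_j}(y)\,{\rm d}y$, and the sifting property reduces it to $\delta_t(y_j)$. This gives the first equality in the statement.

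To get the case split, I would note that in the paper's localization sense $\delta_t(y_j)$ acts as the indicator $\mathds{1}_{\{y_j=t\}}$. The cleanest way to make this rigorous, and to sidestep the formal product of deltas, is to compute the limit directly. Since $Y$ is supported on the $y_j$,
\begin{align*}
\mathbb{P}(t\leqslant Y<t+\Delta t)=\sum_{j\in J}\mathbb{P}(Y=y_j)\,\mathds{1}_{\{t\leqslant y_j<t+\Delta t\}}.
\end{align*}
As $\Delta t\to0^+$, each indicator tends to $\mathds{1}_{\{y_j=t\}}$, and dominated convergence applies with dominating sequence $\mathbb{P}(Y=y_j)$. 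So only the atom at $t$ survives: the sum equals $\mathbb{P}(Y=t)$ if $t\in\text{Supp}(Y)$ and $0$ otherwise.

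The main obstacle is interpretive rather than technical. After dividing by $\Delta t$, the classical limit is $+\infty$ at an atom, so the displayed formula must be read with the paper's convention that $\delta_t(y_j)$ evaluates to the indicator of $\{y_j=t\}$. That is exactly the convention used in Proposition~\ref{prop-disc-1}, and I would cite that proposition (with $X$ constant and $g\equiv1$) to justify the final step consistently with the earlier results.
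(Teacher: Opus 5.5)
Your derivation of the first equality (Heaviside decomposition of $F_Y$, substitution into Proposition~\ref{prop-3}, linearity in the integrator, $H_{y_j}'=\delta_{y_j}$, sifting) coincides with the paper's proof. The paper also reaches the case split through that same final sifting step, i.e.\ by silently reading $\delta_t(y_j)$ as $\mathds{1}_{\{y_j=t\}}$.

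The genuine problem is the step you offer to make the case split rigorous. Your dominated-convergence computation is correct, but what it proves is
\[
\lim_{\Delta t\to 0^+}\mathbb{P}(t\leqslant Y<t+\Delta t)=\mathbb{P}(Y=t).
\]
This concerns the un-normalized probability, not the quotient by $\Delta t$ in the proposition. For the quotient, at every atom one has $\mathbb{P}(t\leqslant Y<t+\Delta t)/\Delta t\geqslant \mathbb{P}(Y=t)/\Delta t\to+\infty$. So the displayed case split fails at every atom of $Y$, and no argument can establish it for the quantity in the statement. Appealing to the convention, or to Proposition~\ref{prop-disc-1}, which rests on the same convention, assumes the conclusion rather than proving it. The obstacle is therefore not merely interpretive. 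The interchange you justify by countable additivity is also only formal, since $\delta_t$ is not a function that can be integrated against ${\rm d}F_Y$.

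What can actually be proved splits into two separate facts. Regarded as a function of $t$, the quotient converges in $\mathcal{D}'(\mathbb{R})$: for $\varphi\in C_c^\infty(\mathbb{R})$,
\begin{align*}
\int_{-\infty}^{\infty}\frac{\mathbb{P}(t\leqslant Y<t+\Delta t)}{\Delta t}\,\varphi(t)\,{\rm d}t
&=\sum_{j\in J}\mathbb{P}(Y=y_j)\,\frac{1}{\Delta t}\int_{y_j-\Delta t}^{y_j}\varphi(t)\,{\rm d}t
\\
&\longrightarrow\sum_{j\in J}\mathbb{P}(Y=y_j)\,\varphi(y_j),
\end{align*}
by dominated convergence in $j$ with bound $\mathbb{P}(Y=y_j)\sup|\varphi|$.

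This is the rigorous content of the first equality. The limit is $F_Y'=\sum_j\mathbb{P}(Y=y_j)\,\delta_{y_j}$, so $\delta_t(y_j)$ must be read as the Dirac mass $\delta_{y_j}(t)$ in the variable $t$.

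The case split is a pointwise identity and cannot be glued to this one. The function $t\mapsto\mathbb{P}(Y=t)$ vanishes outside a countable set, so it defines the zero distribution, not $\sum_j\mathbb{P}(Y=y_j)\,\delta_{y_j}$. The second equality thus conflates a Dirac delta with a Kronecker delta.

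You diagnosed the symptom correctly, but neither your proposal nor the paper's proof gives a valid argument for the statement as written. The sound conclusion is to state and prove the two facts separately: the distributional limit of the quotient above, and your pointwise limit of the un-normalized probability.
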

\begin{proof}
	Let $F_Y$ denote the cumulative distribution function of $Y$. Note that $F_Y$ can be written as
	\begin{align*}
		F_Y(y)
		=
		\sum_{j\in J} \mathbb{P}(Y=y_j) H_{y_j}(y),
		\quad y\in\mathbb{R},
	\end{align*}
	where $H_y$ denotes the Heaviside step function introduced in Example \ref{ex-1}.
	
	Using Proposition \ref{prop-3} together with the fact that the Lebesgue-Stieltjes integral is linear with respect to the integrator, we obtain
	\begin{align*}
		\lim_{\Delta t\to 0^+}
		{
			\mathbb{P}(t\leqslant Y<t+\Delta t)
			\over \Delta t}
		=
		\sum_{j\in J} \mathbb{P}(Y=y_j)
		\int_{-\infty}^{\infty}
		\delta_t(y)
		{\rm d}H_{y_j}(y).
	\end{align*}
	Using the fact that $H_{y_j}'=\delta_{y_j}$ (Example \ref{ex-1}), the above expression becomes 
	\begin{align*}
		\sum_{j\in J} \mathbb{P}(Y=y_j)
		\int_{-\infty}^{\infty}
		\delta_t(y)
		\delta_{y_j}(y)
		{\rm d}y.
	\end{align*}
	Applying the sifting property of the Dirac delta, the result follows.
\end{proof}

\subsection{Substitution principle for conditional expectations}\label{Substitution_principle}

From Propositions \ref{prop-cont-1}, \ref{prop-cont-2}, and \ref{prop-disc-1}, for a fixed $t\in\operatorname{Supp}(Y)$, the substitution principle for the conditional expectation of $g(X,Y)$ given $Y=t$ is given by
\begin{align*}
	\mathbb{E}[g&(X,Y)\vert Y=t]
	\equiv
	\lim_{\Delta t\to 0^+}
	\mathbb{E}[g(X,Y)\mid t\leqslant Y<t+\Delta t]	
	\\[0,2cm]	
	&=
	\lim_{\Delta t\to 0^+}
	{			\mathbb{E}[g(X,Y)\mathds{1}_{\{t\leqslant Y<t+\Delta t\}}]\over \mathbb{P}(t\leqslant Y<t+\Delta t)}
	\\[0,2cm]	
	&=
	\begin{cases}
		\displaystyle
		\int_{-\infty}^{\infty}
		g(x,t) f_{X\vert Y=t}(x)
		{\rm d}x, & \text{if} \ X \text{ and } Y \ \text{are absolutely continuous},
		\\[0,7cm]
		\displaystyle
		\sum_{i\in I} 
		g(x_i,t)
		p_{X\vert Y=t}(x_i),
		& \text{if} \ X \text{ and } Y \ \text{are discrete},
	\end{cases}
	\\[0,2cm]	
	&=
	\mathbb{E}[g(X,t)\vert Y=t],
	\nonumber
\end{align*}
where
\begin{align*}
	f_{X\vert Y=t}(x)\equiv
	{f_{X,Y}(x,t)\over f_Y(t)}
	\quad \text{and} \quad
	p_{X\vert Y=t}(x_i)\equiv {\mathbb{P}(X=x_i, Y=t)\over 	
		\mathbb{P}(Y=t)}.
\end{align*}

\begin{remark}
	The previous identity shows that conditioning on the event $\{Y=t\}$ may be interpreted as a substitution operation on the second argument of $g$. In this sense, the randomness associated with $Y$ is replaced by the deterministic value $t$, while the remaining uncertainty is entirely described by the conditional distribution of $X$ given $Y=t$.
\end{remark}

\subsection{Substitution principle for the  conditional law}\label{Substitution_principle_law}

Taking
$
g(X,Y)=\mathds{1}_{\{(X,Y)^\top\in \boldsymbol{B}\}},
$
in Subsection \ref{Substitution_principle}, where $\boldsymbol{B}$ is a Borel set in $\mathbb{R}^2$, we obtain, for a fixed $t\in\operatorname{Supp}(Y)$, the following substitution principle for the conditional law of $(X,Y)^\top$:
\begin{align*}
	\mathbb{P}(&(X,Y)^\top\in \boldsymbol{B}\vert Y=t)
	\equiv
	\lim_{\Delta t\to 0^+}
	\mathbb{P}((X,Y)^\top\in \boldsymbol{B}\mid t\leqslant Y<t+\Delta t)
	\\[0,2cm]	
	&=
	\lim_{\Delta t\to 0^+}
	{			\mathbb{P}((X,Y)^\top\in \boldsymbol{B} \mid \mathds{1}_{\{t\leqslant Y<t+\Delta t\}})\over \mathbb{P}(t\leqslant Y<t+\Delta t)}
	\\[0,2cm]	
	&=
	\begin{cases}
		\displaystyle
		\int_{B_t}
		f_{X\vert Y=t}(x)
		{\rm d}x, & \text{if} \ X \text{ and } Y \ \text{are absolutely continuous},
		\\[0,7cm]
		\displaystyle
		\sum_{i:x_i\in B_t} 
		p_{X\vert Y=t}(x_i),
		& \text{if} \ X \text{ and } Y \ \text{are discrete},
	\end{cases}
	\\[0,2cm]	
	&=
	\mathbb{P}((X,t)^\top\in \boldsymbol{B} \vert Y=t)
	\\[0,2cm]	
	&=
	\mathbb{P}(X\in B_t\vert Y=t),
	\nonumber
\end{align*}
where $f_{X\vert Y=t}$ and $p_{X\vert Y=t}$ are as given in Subsection \ref{Substitution_principle}, and $B_t\equiv\{x\in\mathbb{R}:(x,t)^\top\in \boldsymbol{B}\}$ is a Borel set in $\mathbb{R}$.

\begin{remark}
	The conditional distribution of $(X,Y)^\top$ given $Y=t$ is concentrated on the set
	$
	\{(x,t)^\top:x\in\mathbb{R}\}.
	$
	Consequently, conditioning on $Y=t$ reduces the two-dimensional distribution of $(X,Y)^\top$ to the one-dimensional conditional distribution of $X$ given $Y=t$.
\end{remark}

\subsection{The conditional  expectation given $Y=t$}

Combining Propositions \ref{prop-cont-1} (with $g\circ\pi_1$, where $\pi_1:\mathbb{R}^2\to\mathbb{R}$ denotes the projection onto the first coordinate, instead of $g$) and \ref{prop-cont-2}, as well as Propositions \ref{prop-disc-1} (with $g\circ\pi_1$ instead of $g$) and \ref{prop-disc-2}, we obtain, for a fixed $t\in\operatorname{Supp}(Y)$, that the conditional expectation of $g(X)$ given $Y=t$ is given by
\begin{align*}
	\mathbb{E}[g(X)&\vert Y=t]
	\equiv
	\lim_{\Delta t\to 0^+}
	\mathbb{E}[g(X)\mid t\leqslant Y<t+\Delta t]	
	\\[0,2cm]	
	&=
	\lim_{\Delta t\to 0^+}
	{			\mathbb{E}[g(X)\mathds{1}_{\{t\leqslant Y<t+\Delta t\}}]\over \mathbb{P}(t\leqslant Y<t+\Delta t)}
	\\[0,2cm]	
	&=
	\begin{cases}
		\displaystyle
		\int_{-\infty}^{\infty}
		g(x) f_{X\vert Y=t}(x)
		{\rm d}x, & \text{if} \ X \text{ and } Y \ \text{are absolutely continuous},
		\\[0,7cm]
		\displaystyle
		\sum_{i\in I} 
		g(x_i)
		p_{X\vert Y=t}(x_i),
		& \text{if} \ X \text{ and } Y \ \text{are discrete},
	\end{cases}
	\nonumber
\end{align*}
where $f_{X\vert Y=t}$ and $p_{X\vert Y=t}$ are as given in Subsection \ref{Substitution_principle}.

\begin{remark}
The previous formula may be viewed as a particular case of the substitution principle established in Subsection~\ref{Substitution_principle}, obtained by considering functions depending only on the first coordinate. It therefore provides a unified representation of conditional expectations in both discrete and absolutely continuous settings.
\end{remark}

\subsection{The hazard function}\label{The risk function}

From Propositions \ref{prop-cont-2} and \ref{prop-disc-2}, we obtain that, for a fixed value $t\in\operatorname{Supp}(Y)$, the hazard function for a nonnegative random variable $Y$ is given by
\begin{align*}
h_Y(t)
&\equiv
\lim_{\Delta t\to 0^+}
{\mathbb{P}(t\leqslant Y<t+\Delta t\mid Y\geqslant t)\over \Delta t}
\nonumber
\\[0,2cm]
&=
{1\over \mathbb{P}(Y\geqslant t)}  
\lim_{\Delta t\to 0^+}
{\mathbb{P}(t\leqslant Y<t+\Delta t)\over \Delta t}
\\[0,2cm]
&=
\begin{cases}
	\displaystyle
	{f_{Y}(t)\over S_{Y}(t)}, & \text{if} \ Y \ \text{is absolutely continuous},
	\\[0,7cm]
	{	\displaystyle
		\sum_{j\in J}
		\mathbb{P}(Y=y_j)
		\delta_{t}(y_j)\over 	
		\displaystyle
		\sum_{j\in J}
		\mathbb{P}(Y=y_j)
		\delta_{t}(y_j)+S_{Y}(t)},
	& \text{if} \ Y \ \text{is discrete},
\end{cases}
\nonumber
\end{align*}
where $S_Y$ denotes the survival function of $Y$.

\begin{remark}
The above representation shows that the hazard function may be interpreted as a localized rate of occurrence at time $t$ conditional on survival up to time $t$. The distributional formulation provides a common expression encompassing both continuous and discrete lifetime models.
\end{remark}

\subsection{Improper distribution}

In survival analysis, a cumulative distribution function $F$ is generally said to be improper (or defective) if
$
\lim_{t\to\infty} F(t) < 1.
$
Equivalently, the corresponding survival function
$
\overline F(t)=1-F(t)
$
satisfies
$
\lim_{t\to\infty} S(t) > 0.
$

Therefore, if $T$ denotes a lifetime random variable, then
$
\mathbb{P}(T=\infty)
=
1-\lim_{t\to\infty}F(t)
>0.
$

Hence, there is a positive probability that the event of interest never occurs.

An example of an improper survival function can be seen in the Berkson and Gage mixture models \citep{Berkson1952}, commonly represented as
\[
\overline F(t)
=
p+(1-p)\overline F_0(t),
\quad 0<p<1,
\]
where $p=\mathbb{P}(T=\infty)$ is called the cure fraction, and $\overline{F}_0$ is a (proper) survival function satisfying
$
\lim_{t\to\infty} \overline F_0(t)=0.
$
Consequently,
$
\lim_{t\to\infty} \overline F(t)=p.
$

Hence, $F$ is an improper probability distribution on $[0,\infty)$. Nevertheless, it becomes a (proper) distribution when the state space is extended to include the point $\infty$, that is, to the extended nonnegative real line $\overline{\mathbb{R}^+_0}=[0,\infty)\cup\{\infty\}$.

Indeed, consider the stochastic representation of a mixture composed of a degenerate random variable at $\infty$ and a random variable $Z$, namely,
\begin{align*}
T\stackrel{d}{=}
\begin{cases}
	\infty, & \text{if} \ B=1,
	\\
	Z, & \text{if} \ B=0,
\end{cases}
\end{align*}
where $Z$ is independent of $B\sim\text{Bernoulli}(p)$, and $\mathbb{P}(Z\leqslant t)=1-\overline F_0(t)$. In the above, $\stackrel{d}{=}$ denotes equality in distribution of random variables.

Using the law of total probability, for any $t\in\overline{\mathbb{R}^+_0}$,
\begin{align*}
F_T(t)
\equiv 
\mathbb{P}(T\leqslant t)
&=
\mathbb{P}(T\leqslant t\mid B=1)\mathbb{P}(B=1)
+
\mathbb{P}(T\leqslant t\mid B=0)\mathbb{P}(B=0)
\\
&=
p
H_{\infty}(t)
+
(1-p)
\mathbb{P}(Z\leqslant t)
\\
&=
p
H_{\infty}(t)
+
(1-p)[1-\overline F_0(t)]
\\[0,2cm]
&=	
\begin{cases}
	0, & t<0,\\
	(1-p)[1-\overline F_0(t)], & 0\leqslant t<\infty,\\
	1, & t=\infty,
\end{cases}
\end{align*}
where $H_y$ is the Heaviside step function as given in Example \ref{ex-1}.

Therefore, \(F_T\) defines a (proper) distribution function on the extended real line $\overline{\mathbb{R}^+_0}$.

	%
	%
	%
	%
	%
	%
	%
%

Moreover, the survival function of $T$ is given by
\begin{align*}
\overline F_T(t)
&=
p
[1-H_{\infty}(t)]
+
(1-p)\overline F_0(t)
\\[0,2cm]
&=
\begin{cases}
	1, & t<0,\\
	p
	+
	(1-p)\overline F_0(t), & 0 \leqslant t<\infty,\\
	0, & t=\infty.
\end{cases}
\end{align*}

\begin{remark}
The preceding construction shows that every improper distribution on $[0,\infty)$ may be regarded as a proper probability distribution on the extended state space
$
\overline{\mathbb{R}^+_0}=[0,\infty)\cup\{\infty\}.
$
From this perspective, defective distributions arise naturally from probability measures that assign positive mass to the point at infinity.
\end{remark}

In summary, the distinction between proper and improper distributions is merely terminological. A function either satisfies the axioms of a probability distribution or it does not. What is commonly referred to as an ``improper distribution'' is simply a function obtained by restricting the support of an underlying distribution, thereby reducing its total mass below one, as naturally expected.

\begin{remark}
The results obtained in this section reveal a common structure underlying conditional expectations, conditional laws, hazard functions, and improper distributions. In each case, the conditioning operation is represented through a localization mechanism induced by the Dirac delta distribution, providing a unified treatment of discrete and absolutely continuous random variables.
\end{remark}

\section{Wasserstein distance formulas}\label{Wasserstein distance formulas}

This section establishes moment bounds and quantile representations for Wasserstein-type functionals through copula-based extremal arguments.

Let $(X,Y)^\top$ be an $\mathbb{R}^2$-valued random vector with joint distribution function $F_{X,Y}$ and marginal distribution functions $F_X$ and $F_Y$. By Sklar's theorem \citep{Sklar-1959}, there exists a copula $C$ such that
\[
F_{X,Y}(x,y)=C\big(F_X(x),F_Y(y)\big),
\quad (x,y)^\top\in\mathbb{R}^2.
\]

Let $f:\mathbb{R}^2\to\mathbb{R}$ be a Borel measurable function. We are interested in the expectation
$
\mathbb{E}_C[f(X,Y)],
$
particularly in its monotonicity properties over suitable classes of copulas, since these properties yield lower and upper bounds for $\mathbb{E}_C[f(X,Y)]$.

Assuming that the marginal distributions are fixed, the expectation depends only on the copula $C$. Accordingly, the expectation operator can be written as
\begin{align}
\pi_f(C)
\equiv
\mathbb{E}[f(X,Y)]
&=
\mathbb{E}_{C}[f(X,Y)]
\nonumber
\\[0,2cm]
&=
\int_{\mathbb{R}^2} f(x,y){\rm d}C(F_{X}(x),F_{Y}(y))
\nonumber
\\[0,2cm]
&=
\int_{(0,1)^2}
f(F_{X}^{-1}(u),F_{Y}^{-1}(v))
{\rm d} C(u,v). \label{id-exp-cop}
\end{align}
\begin{definition}[\cite{Lux2017}]
\label{antitonic-def}
A function $f: \mathbb{R}^2\to \mathbb{R}$ is called $\Delta$-antitonic if for every square $[a_1,b_1]\times[a_2,b_2]\subset\mathbb{R}^2$, with $a_j<b_j$ for $j=1,2$, it holds that
\begin{align*}
	(\Delta^1_{a_1,b_1}\circ\Delta^2_{a_2,b_2})f(x,y)\geqslant 0 \quad \text{for all} \ (x,y)^\top\in \mathbb{R}^2,
\end{align*}
where we are adopting the notations
\begin{align*}
	\Delta^1_{a,b}(f(x,y))=f(b,y)-f(a,y),
	\quad
	\Delta^2_{a,b}(f(x,y))=f(x,b)-f(x,a).
\end{align*}
The operator $\Delta$ is commonly known as finite difference operator.
\end{definition}


%
\begin{remark}\label{rem-antitonic}
If $f: \mathbb{R}^2\to \mathbb{R}$ is at least twice differentiable, note that
\begin{align*}
	(\Delta^1_{a_1,b_1}\circ\Delta^2_{a_2,b_2})f(x,y)
	&
	=
	\Delta^1_{a_1,b_1}(f(x,b_2)-f(x,a_2))
	\\[0,2cm]
	&=
	[f(b_1,b_2)-f(b_1,a_2)]
	-
	[f(a_1,b_2)-f(a_1,a_2)]
	\\[0,2cm]
	&=	
	\int^{b_2}_{a_2}\int^{b_1}_{a_1}
	{\partial^2 f(x,y)\over\partial x\partial y}\,
	{\rm d}x
	{\rm d}y.
\end{align*}
Hence, a sufficient condition for $f$ to be $\Delta$-antitonic is that
\begin{align*}
	{\partial^2 f(x,y)\over\partial x\partial y}\geqslant 0, \quad \forall (x,y)^\top\in(a_1,b_1)\times(a_2,b_2). 
\end{align*}
\end{remark}
%
%

\begin{proposition}\label{antitonic-function}
%
The function $-f$, where $f(x,y)=\vert x-y\vert^r$, with $r\geqslant 1$, is $\Delta$-antitonic.
\end{proposition}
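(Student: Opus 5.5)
We need to show that for $a_1<b_1$, $a_2<b_2$,
\[
f(b_1,b_2)-f(b_1,a_2)-f(a_1,b_2)+f(a_1,a_2)\leqslant 0,
\qquad f(x,y)=|x-y|^r .
\]
The plan is to write $f(x,y)=\phi(x-y)$ with $\phi(z)=|z|^r$. For $r\geqslant 1$, $\phi$ is convex on $\mathbb{R}$, and the whole proof reduces to this convexity. Twice differentiable functions are not enough here: Remark~\ref{rem-antitonic} cannot be applied directly, because $|x-y|^r$ fails to be twice differentiable on the diagonal when $1\leqslant r<2$ (for $r=1$ it is not even differentiable there). So the argument should be a direct finite-difference one rather than going through $\partial^2 f/\partial x\partial y$.

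Set $h=b_1-a_1>0$ and $k=b_2-a_2>0$, and put $z=a_1-b_2$. The four arguments of $\phi$ are then
\[
a_1-a_2=z+k,\qquad b_1-b_2=z+h,\qquad b_1-a_2=z+h+k,\qquad a_1-b_2=z .
\]
The mixed difference therefore equals
\[
\phi(z+h)+\phi(z+k)-\phi(z+h+k)-\phi(z),
\]
and we must show this is $\leqslant 0$, i.e.\ $\phi(z)+\phi(z+h+k)\geqslant \phi(z+h)+\phi(z+k)$.

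This is the standard consequence of convexity: the increment $t\mapsto \phi(t+h)-\phi(t)$ is nondecreasing for $h>0$. Indeed, because slopes of chords of a convex function are nondecreasing, the chord slope over $[z+k,\,z+k+h]$ is at least the chord slope over $[z,\,z+h]$. Multiplying by $h$ gives $\phi(z+h+k)-\phi(z+k)\geqslant \phi(z+h)-\phi(z)$, which is exactly the required inequality. Equivalently, one can write $z+h$ and $z+k$ as convex combinations of $z$ and $z+h+k$ with weights $\lambda$ and $1-\lambda$, where $\lambda=k/(h+k)$, and add the two Jensen inequalities.

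The only genuine point is verifying that $|z|^r$ is convex for $r\geqslant 1$. It is the composition of the convex function $|\cdot|$ with $t\mapsto t^r$, which is convex and nondecreasing on $[0,\infty)$. This is the step to state carefully, since it is where the hypothesis $r\geqslant 1$ enters; for $r<1$ the conclusion fails.

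Finally, the definition asks for the inequality "for all $(x,y)$", but the double difference does not depend on $(x,y)$, so the result for $-f$ follows. As a consistency check, for $r\geqslant 2$ (or away from the diagonal) the conclusion agrees with Remark~\ref{rem-antitonic}: there $\partial^2(-f)/\partial x\partial y=r(r-1)|x-y|^{r-2}\geqslant 0$.
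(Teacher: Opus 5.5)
Your proof is correct and follows essentially the same route as the paper: both write $f(x,y)=\psi(x-y)$ with $\psi(t)=|t|^r$ convex, and reduce the mixed difference to comparing the increments $\psi(t+h)-\psi(t)$, $h=b_1-a_1$, at the two points $a_1-b_2\leqslant a_1-a_2$. You also justify the two facts the paper takes for granted, namely that these increments are nondecreasing (via chord slopes) and that $|t|^r$ is convex for $r\geqslant 1$.
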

\begin{proof}
%
%
	Let $\psi(t)=|t|^r$, $r\geqslant 1$. Since $\psi$ is convex, the increment
	$\psi(t+h)-\psi(t)$ is nondecreasing in $t$ for every $h>0$. Thus, setting $h=b_1-a_1>0$, we have
	\[
	\begin{aligned}
		(\Delta^1_{a_1,b_1}\circ\Delta^2_{a_2,b_2})(-f)
		&=
		(|b_1-a_2|^r-|a_1-a_2|^r)
		-
		(|b_1-b_2|^r-|a_1-b_2|^r)
		\\[0,2cm]
		&=
		[\psi(a_1-a_2+h)-\psi(a_1-a_2)]
		-[\psi(a_1-b_2+h)-\psi(a_1-b_2)]
		\geqslant 0,
	\end{aligned}
	\]
	because $a_1-a_2\geqslant a_1-b_2$. Therefore, $-f$ is $\Delta$-antitonic.
\end{proof}

\begin{remark}\label{rem-imp}
Other $\Delta$-antitonic functions include
$f(x,y)=xF_Y(y)$,
$f(x,y)=C(F_X(x),F_Y(y))$,
and
$f(x,y)=F_X(x)F_Y(y)$.
In contrast, the functions
$f(x,y)=\mathds{1}_{\{x>y\}}$
and
$f(x,y)=\mathds{1}_{\{x\neq y\}}$
are not $\Delta$-antitonic.
\end{remark}

To state the following result, let (for $0<u,v<1$)
\[
W_2(u,v)=\max\left\{0,u+v-1\right\}
\quad
\text{and}
\quad
M_2(u,v)=\min\{u,v\},
\]
denote the lower and upper Fr\'echet-Hoeffding bounds, respectively.
It is well known that $W_2(u,v)$ and $M_2(u,v)$ are copulas
\citep{Genest1999}.
\begin{proposition}\label{antitonic}
If $f:\mathbb{R}^2\to
\mathbb{R}$ is $\Delta$-antitonic, then $\pi_f(C)$ is monotonically increasing in $C$, and besides
\begin{align*}
	\pi_f(W_2)
	\leqslant
	\pi_f(C)
	\leqslant
	\pi_f(M_2).
\end{align*}

In this setting, if $-f$ is $\Delta$-antitonic, then all inequalities in the above equation are reversed.
%
\end{proposition}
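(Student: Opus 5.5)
We plan to reduce the statement to an integration-by-parts identity on the unit square. In that identity the copula enters linearly and is paired against a nonnegative measure generated by $f$. The ordering meant by ``monotonically increasing in $C$'' is the concordance (pointwise) order, i.e.\ $C_1\leqslant C_2$ on $(0,1)^2$. The two-sided bound then follows at once, since the Fr\'echet--Hoeffding inequalities $W_2\leqslant C\leqslant M_2$ hold for every copula $C$.

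\textbf{Step 1 (representation).} Starting from \eqref{id-exp-cop}, set $\tilde f(u,v)=f(F_X^{-1}(u),F_Y^{-1}(v))$. Because quantile functions are nondecreasing, $\tilde f$ inherits the $\Delta$-antitonic property. Indeed, for $u_1<u_2$ and $v_1<v_2$ the rectangle increment of $\tilde f$ equals the rectangle increment of $f$ over $[F_X^{-1}(u_1),F_X^{-1}(u_2)]\times[F_Y^{-1}(v_1),F_Y^{-1}(v_2)]$. That increment is nonnegative, and it vanishes when a side degenerates. Hence $\tilde f$ induces a nonnegative (Lebesgue--Stieltjes) measure $\mu_{\tilde f}({\rm d}u,{\rm d}v)$, which is formally $\partial^2\tilde f/\partial u\partial v\geqslant 0$ in the distributional sense of Section~\ref{Distributional derivatives} (compare Remark~\ref{rem-antitonic}).

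\textbf{Step 2 (integration by parts, Hoeffding-type formula).} We aim to write
\[
\pi_f(C_2)-\pi_f(C_1)=\int_{(0,1)^2}\big[C_2(u,v)-C_1(u,v)\big]\,\mu_{\tilde f}({\rm d}u,{\rm d}v).
\]
Formally, the double integration by parts $\int \tilde f\,{\rm d}C=\int C\,{\rm d}\tilde f+\text{boundary terms}$ is exactly the distributional identity $\int \tilde f\,\partial_{uv}C=\int C\,\partial_{uv}\tilde f$ modulo boundary contributions. The boundary contributions involve only the margins $C(u,1)=u$ and $C(1,v)=v$, together with $C(u,0)=C(0,v)=0$, and these are the same for every copula. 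Therefore they cancel in the difference. A cleaner route is to fix a reference point $(u_0,v_0)$ and write
\[
\tilde f(u,v)=\tilde f(u,v_0)+\tilde f(u_0,v)-\tilde f(u_0,v_0)+\pm\mu_{\tilde f}(\text{rectangle between }(u_0,v_0)\text{ and }(u,v)).
\]
Integrating against ${\rm d}C$ and applying Fubini, the one-variable terms depend only on the uniform margins. The rectangle term produces $\int[\mathbb{P}_C(U>s,V>t)\text{ or similar}]\,\mu_{\tilde f}({\rm d}s,{\rm d}t)$, where joint tail probabilities equal $C$ plus marginal terms. Since $\mathbb{P}_C(U>s,V>t)=1-s-t+C(s,t)$, the four sign-quadrants all combine into $+C$ plus copula-free terms.

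\textbf{Step 3 (conclusion).} If $C_1\leqslant C_2$ pointwise and $\mu_{\tilde f}\geqslant 0$, the displayed difference is nonnegative, so $\pi_f$ is increasing. Taking $C_1=W_2$ and $C_2=C$, and then $C_1=C$ and $C_2=M_2$, gives the chain of inequalities. For $-f$ $\Delta$-antitonic we apply the result to $-f$ and use $\pi_{-f}=-\pi_f$, which reverses every inequality.

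\textbf{Main obstacle.} The delicate step is Step 2 in full generality. The integrals must be finite for the Fubini/integration-by-parts manipulation to be valid, and $\tilde f$ may be unbounded near the edges of $(0,1)^2$. Moreover, the copula-free boundary terms may individually be infinite even though their difference is zero. We would first prove the identity for $f$ bounded and continuous, restricted to a compact subsquare $[\epsilon,1-\epsilon]^2$, where everything is a finite Riemann--Stieltjes computation. We would then pass to the limit by monotone or dominated convergence, under the standing assumption that $\pi_f(C)$ is finite for the copulas involved (as in \citep{Lux2017}).
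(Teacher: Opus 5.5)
The paper does not argue this proposition itself; it only refers to Theorem~5.5 and Proposition~6.1 of \cite{Lux2017}. Your proposal is therefore a genuinely self-contained alternative. It is the classical Hoeffding/Cambanis--Simons--Stout integration by parts in dimension two, which is in spirit the same kind of integral representation the cited result rests on. Its core is right:
\begin{itemize}
\item After pulling back by the quantile functions, $\tilde f$ is again $\Delta$-antitonic.
\item On each of the four quadrants around $(u_0,v_0)$, the grounded part $G(u,v)=\tilde f(u,v)-\tilde f(u,v_0)-\tilde f(u_0,v)+\tilde f(u_0,v_0)$ has constant sign. Its integral is $\pm\int\mathbb{P}_C(\cdot)\,{\rm d}\mu$.
\item Writing $\mathbb{P}_C(U\geqslant s,V\geqslant t)=1-s-t+C(s,t)$, $\mathbb{P}_C(U<s,V\geqslant t)=s-C(s,t)$, and so on, the copula does enter with coefficient $+1$ in every quadrant.
\item The Fr\'echet--Hoeffding bounds and $\pi_{-f}=-\pi_f$ then finish the argument.
\end{itemize}
The citation buys the $d$-dimensional, quasi-copula setting. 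Your route buys a short transparent proof of the case actually used, and it exposes hypotheses that the statement leaves implicit.

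The weak spot is your plan for the main obstacle. Finiteness of $\pi_f(C)$ alone gives you no dominating function. With uniform margins, $f(u,v)=u^{-1}-v^{-1}$ is $\Delta$-antitonic and $\pi_f(M_2)=0$, yet $f(U,v_0)$ is not integrable. So the one-variable terms and $G$ need not be integrable separately even when $\pi_f(C)$ is finite. Moreover, restricting the integrals to $[\epsilon,1-\epsilon]^2$ produces edge terms involving $C(1-\epsilon,v)$ and $C(u,1-\epsilon)$. These are not copula-free, so the cancellation you rely on is lost before the limit.

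No approximation is needed if you add the standard hypothesis $\mathbb{E}|f(X,y_0)|<\infty$ and $\mathbb{E}|f(x_0,Y)|<\infty$ for some point $(x_0,y_0)$. For $f=-|x-y|^r$ this is just $\mathbb{E}|X|^r,\mathbb{E}|Y|^r<\infty$. Under it, $G$ is $\mu_{C_i}$-integrable, and Tonelli makes every quadrant integral finite for each $C_i$. Subtract the $C_1$ and $C_2$ quadrant integrals before expanding the probabilities. The copula-free terms then cancel inside the integrand, giving $\pi_f(C_2)-\pi_f(C_1)=\int(C_2-C_1)\,{\rm d}\mu\geqslant 0$.

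Step~1 also needs one more sentence. $\tilde f$ is generally neither right- nor left-continuous, since quantile functions are left-continuous. To obtain a countably additive $\mu$, replace $G$ by its right-continuous regularization. The two differ only on countably many horizontal and vertical lines, which are null for every copula because copulas have uniform margins.
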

\begin{proof}
See Theorem 5.5 and Proposition 6.1 of \cite{Lux2017}.
\end{proof}

%
\begin{theorem}\label{main-result}
If the function $f:\mathbb{R}^2\to
\mathbb{R}$ is $\Delta$-antitonic, then
\begin{align*}
	\int_{0}^{1} 
	f(F_X^{-1}(u),F_Y^{-1}(1-u))
	{\rm d} u
	\leqslant
	\mathbb{E}[f(X,Y)]
	\leqslant
	\int_{0}^{1} 
	f(F_X^{-1}(u),F_Y^{-1}(u))
	{\rm d} u.
\end{align*}

In this setting, if $-f$ is $\Delta$-antitonic, then all inequalities in the above equation are reversed.
\end{theorem}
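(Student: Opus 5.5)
The plan is to combine Proposition~\ref{antitonic} with the copula representation \eqref{id-exp-cop} of the expectation, and then to compute $\pi_f$ explicitly at the two Fr\'echet--Hoeffding bounds. By Sklar's theorem, with the marginals $F_X$ and $F_Y$ fixed, $\mathbb{E}[f(X,Y)]=\pi_f(C)$ for some copula $C$. Proposition~\ref{antitonic} then gives $\pi_f(W_2)\leqslant \pi_f(C)\leqslant \pi_f(M_2)$ when $f$ is $\Delta$-antitonic, with the inequalities reversed when $-f$ is $\Delta$-antitonic. It therefore suffices to identify
\[
\pi_f(M_2)=\int_0^1 f(F_X^{-1}(u),F_Y^{-1}(u))\,{\rm d}u,
\qquad
\pi_f(W_2)=\int_0^1 f(F_X^{-1}(u),F_Y^{-1}(1-u))\,{\rm d}u .
\]

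To evaluate these, I will use \eqref{id-exp-cop}, in which ${\rm d}C(u,v)$ becomes the density $\partial^2 C/\partial u\partial v$ in the distributional sense. For $M_2$, Proposition~\ref{prop-cop} gives ${\rm d}M_2(u,v)=\delta_u(v)\,{\rm d}u\,{\rm d}v$. Integrating first in $v$ and applying the sifting property of the Dirac delta yields
\[
\int_0^1\!\!\int_0^1 f(F_X^{-1}(u),F_Y^{-1}(v))\,\delta_u(v)\,{\rm d}v\,{\rm d}u=\int_0^1 f(F_X^{-1}(u),F_Y^{-1}(u))\,{\rm d}u .
\]
For $W_2$, the same computation with $\delta_{1-u}(v)$ replaces $v$ by $1-u$. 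This gives the lower integral.

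As a rigorous backup that avoids distributions, I will note that $M_2$ is the law of $(U,U)$ and $W_2$ is the law of $(U,1-U)$, where $U\sim\mathrm{Uniform}(0,1)$. One checks directly that $\mathbb{P}(U\leqslant u,U\leqslant v)=\min\{u,v\}$ and $\mathbb{P}(U\leqslant u,1-U\leqslant v)=\max\{0,u+v-1\}$. By the quantile transform, $(F_X^{-1}(U),F_Y^{-1}(U))$ has marginals $F_X$ and $F_Y$ and copula $M_2$, so the change-of-variables formula produces the displayed integrals at once.

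The main obstacle is the justification at the level of generality of arbitrary, possibly discrete or mixed, marginals. Two points need care. First, the copula is not unique when the marginals have atoms. Second, the formula \eqref{id-exp-cop} with generalized inverses must be applied under the coupling realized by $M_2$ or $W_2$. I would handle both by working with the random-vector representation above: any admissible $C$ gives the same joint law on the ranges of the marginals, and Proposition~\ref{antitonic}, cited from \cite{Lux2017}, already covers the comparison. I will also assume the integrability needed for all three quantities to be well defined.
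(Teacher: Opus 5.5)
Your proposal is correct and follows essentially the same route as the paper. Proposition~\ref{antitonic} gives $\pi_f(W_2)\leqslant\pi_f(C)\leqslant\pi_f(M_2)$, and the two extremal values are then evaluated through \eqref{id-exp-cop} using the distributional densities $\delta_u(v)$ and $\delta_{1-u}(v)$ from Proposition~\ref{prop-cop} and the sifting property. Your backup via the couplings $(U,U)$ and $(U,1-U)$ justifies that evaluation step more elementarily, without Dirac calculus, and it is the same device the paper uses later in the proofs of Theorems~\ref{Wasserstein distance} and \ref{Wasserstein distance-1}.
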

\begin{proof}
%
If $f$ is $\Delta$-antitonic, by Proposition \ref{antitonic}, we have
$\pi_f(M_2)
\geqslant
\pi_f(C)
\geqslant
\pi_f(W_2)$. 
Then, by using  \eqref{id-exp-cop}  consecutively, we get
\begin{align}\label{last-integral}
	\mathbb{E}[f(X,Y)]
	\stackrel{\eqref{id-exp-cop} }{=}
	\pi_f(C)
	\nonumber
	&\leqslant 	
	\pi_f(M_2)
	\nonumber
	\\[0,2cm]
	&\stackrel{\eqref{id-exp-cop} }{=}
	\mathbb{E}_{M_2}[f(X^*,Y^*)]
	\nonumber
	\\[0,2cm]
	&\stackrel{\eqref{id-exp-cop} }{=}
	\int_{(0,1)^2}
	f(F_{X}^{-1}(u),F_{Y}^{-1}(v))
	{\rm d} M_2(u,v)
	\nonumber
	\\[0,2cm]
	&=
	\int_{(0,1)^2}
	f(F_{X}^{-1}(u),F_{Y}^{-1}(v))\,
	{\partial^2 M_2(u,v)\over \partial u\partial v}\,
	{\rm d} v 
	{\rm d} u,
\end{align}
where $(X^*,Y^*)^\top$ is a random vector with joint distribution $F_{X^*,Y^*}(x,y) = M_2(F_{X}(x),F_{Y}(y))$ for all $(x,y)^\top\in\mathbb{R}^2$ and marginals $F_X$ and $F_Y$.

In an analogous way, we have
\begin{align}\label{last-integral-1}
	\mathbb{E}[f(X,Y)]
	&\geqslant 	
	\mathbb{E}_{W_2}[f(X',Y')]
	\nonumber
	\\[0,2cm]
	&=
	\int_{(0,1)^2}
	f(F_{X}^{-1}(u),F_{Y}^{-1}(v))\,
	{\partial^2 W_2(u,v)\over \partial u\partial v}\,
	{\rm d} v
	{\rm d} u,
\end{align}
where $(X',Y')^\top$ is a random vector with joint distribution $F_{X',Y'}(x,y) = W_2(F_{X}(x),F_{Y}(y))$ for all $(x,y)^\top\in\mathbb{R}^2$ and marginals $F_X$ and $F_Y$.

Since (see Proposition \ref{prop-cop})
\begin{align*}
	{\partial^2 M_2(u,v)\over \partial u\partial v}
	=
	\delta_u(v),
	\quad 
	{\partial^2 W_2(u,v)\over \partial u\partial v}
	=
	\delta_{1-u}(v),	
\end{align*}
where $\delta_x$ is the Dirac delta function,
the integrals in \eqref{last-integral} and \eqref{last-integral-1} are written as
\begin{align*}
	\int_{0}^{1} 
	f(F_X^{-1}(u),F_Y^{-1}(u))
	{\rm d} u,
	\quad 
	\int_{0}^{1} 
	f(F_X^{-1}(u),F_Y^{-1}(1-u))
	{\rm d} u,
\end{align*}
respectively. 
Hence, by combining the above two integrals with inequalities \eqref{last-integral} and \eqref{last-integral-1}, the proof of first part of theorem follows readily.

The second part, namely the proof that $-f$ is $\Delta$-antitonic, follows analogously and is therefore omitted.
\end{proof}

\begin{corollary}
If $f$ is $\Delta$-antitonic, then
\begin{multline*}
	\int_{0}^{1} 
	f(F_X^{-1}(u),F_Y^{-1}(1-u))
	{\rm d} u
	\leqslant
	\int_{(0,1)^2}
	f(F_X^{-1}(u),F_Y^{-1}(v))
	{\rm d} v
	{\rm d} u
	\\[0,2cm]
	\leqslant
	\int_{0}^{1} 
	f(F_X^{-1}(u),F_Y^{-1}(u))
	{\rm d} u.
\end{multline*}

In this setting, if $-f$ is $\Delta$-antitonic, then all inequalities in the above equation are reversed.
\end{corollary}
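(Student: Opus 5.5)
This corollary is the special case of Theorem~\ref{main-result} in which $X$ and $Y$ are independent, i.e.\ the copula is the independence copula $\Pi_2(u,v)=uv$. I would fix the marginals $F_X$ and $F_Y$ and take a pair $(X^\perp,Y^\perp)^\top$ with these marginals and joint distribution $F_{X^\perp,Y^\perp}(x,y)=F_X(x)F_Y(y)=\Pi_2(F_X(x),F_Y(y))$. Such a pair exists, for instance as $X^\perp=F_X^{-1}(U)$ and $Y^\perp=F_Y^{-1}(V)$ with $U,V$ independent uniforms on $(0,1)$. Since $\Pi_2$ is a copula, Theorem~\ref{main-result} applies directly to $(X^\perp,Y^\perp)^\top$ when $f$ is $\Delta$-antitonic, giving
\[
\int_0^1 f(F_X^{-1}(u),F_Y^{-1}(1-u))\,{\rm d}u\leqslant \mathbb{E}[f(X^\perp,Y^\perp)]\leqslant \int_0^1 f(F_X^{-1}(u),F_Y^{-1}(u))\,{\rm d}u .
\]

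The remaining step is to identify the middle term with the double integral, using \eqref{id-exp-cop} with $C=\Pi_2$:
\[
\mathbb{E}[f(X^\perp,Y^\perp)]=\int_{(0,1)^2} f(F_X^{-1}(u),F_Y^{-1}(v))\,{\rm d}\Pi_2(u,v).
\]
Since $\partial^2\Pi_2/\partial u\partial v=1$ classically, the measure ${\rm d}\Pi_2$ is Lebesgue measure on $(0,1)^2$. This is also immediate from the representation via independent uniforms and Fubini's theorem. The middle term is therefore $\int_{(0,1)^2} f(F_X^{-1}(u),F_Y^{-1}(v))\,{\rm d}v\,{\rm d}u$.

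For the reversed statement, when $-f$ is $\Delta$-antitonic, I would invoke the second part of Theorem~\ref{main-result} with the same independent pair; the identification of the middle term is unchanged.

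The only delicate point is integrability. The expectations must be well defined (possibly infinite) so that Fubini and \eqref{id-exp-cop} apply. This needs the same tacit hypotheses on $f$ already implicit in Theorem~\ref{main-result}, for example that $f(X,Y)$ is integrable under the relevant couplings, or that $f$ is bounded below. I would state this assumption explicitly and otherwise treat the argument as a one-line specialization.
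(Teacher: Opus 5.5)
Your proposal is correct and follows the paper's proof exactly. The paper also specializes Theorem~\ref{main-result} to the independence copula $I_2(u,v)=uv$ and identifies $\mathbb{E}_{I_2}[f(X,Y)]$ with the Lebesgue double integral $\int_{(0,1)^2} f(F_X^{-1}(u),F_Y^{-1}(v))\,{\rm d}v\,{\rm d}u$; your explicit construction via independent uniforms and your remark on the tacit integrability hypotheses are sound additions that the paper leaves implicit.
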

\begin{proof}
Since $\mathbb{E}_{I_2}[f(X,Y)]
=
\int_{(0,1)^2}
f(F_X^{-1}(u),F_Y^{-1}(v))
{\rm d} v
{\rm d} u$,
where $I_2$ denotes the independence copula, that is, $I_2(u,v)=uv$,
the proof follows readily by applying Theorem \ref{main-result}.
\end{proof}

\begin{remark}
The scope of Theorem~\ref{main-result} is considerably broader. For instance, by choosing the $\Delta$-antitonic functions
$f(x,y)=xF_Y(y)$,
$f(x,y)=C(F_X(x),F_Y(y))$,
and
$f(x,y)=F_X(x)F_Y(y)$
(see Remark~\ref{rem-imp}),
one obtains bounds for the Gini correlation \citep{Schechtman-Yitzhaki1987}, Kendall's $\tau$ \citep{Idaa2014}, and Spearman's $\rho_s$ \citep{Idaa2014}, respectively.

In the remainder of the paper, we restrict attention to the $\Delta$-antitonic function
$-f(x,y)=-|x-y|^r$, $r\geqslant 1$,
since it gives rise to the absolute difference moments
$\mathbb E|X-Y|^r$, which constitute the main object of study in this work and encompass important quantities such as Wasserstein distances and Gini-type measures.
\end{remark}

\subsection{Moment bounds via coupling functionals}

For $r>0$, define the lower and upper transportation functionals of order $r$ by \cite{Rachev1998}
\[
\mathcal W_r(F,G)
=
\inf_{(X,Y)^\top}
\bigl\{\mathbb E|X-Y|^r\bigr\}^{1/r},
\quad
\overline{\mathcal W}_r(F,G)
=
\sup_{(X,Y)^\top}
\bigl\{\mathbb E|X-Y|^r\bigr\}^{1/r},
\]
where the infimum and supremum are taken over all random vectors $(X,Y)^\top$ with marginal distributions $F$ and $G$, respectively. The functional $\mathcal W_r(F,G)$ is precisely the Wasserstein distance of order $r$ between $F$ and $G$ \cite{Mallows1972,Villani2009}.
\begin{remark}
For $0<r<1$, the quantities
$
\mathcal{W}_r(F,G)
$ 
and
$\overline{\mathcal W}_r(F,G)$
are not a metric on the space of probability distributions. 
\end{remark}

By combining Theorem \ref{main-result} and Proposition \ref{antitonic-function}, we get 
\begin{proposition}\label{bounds-expectation}
If $(X,Y)^\top$ is an $\mathbb{R}^2$-valued random vector with joint distribution $F_{X,Y}$ and marginals $F_{X}$ and $F_Y$, then
\begin{align*}
	\int_{0}^{1} 
	\vert F_X^{-1}(u)-F_Y^{-1}(u)\vert^r
	{\rm d} u
	\leqslant
	\mathbb{E}\vert X-Y\vert^r
	\leqslant
	\int_{0}^{1} 
	\vert F_X^{-1}(u)-F_Y^{-1}(1-u)\vert^r
	{\rm d} u,
\end{align*}
{for all} $r\geqslant 1$.

\begin{proposition}\label{ine-geral-r}
	Under the assumptions of Proposition~\ref{bounds-expectation}, we have  ({for all} $r\geqslant 1$)
	\begin{align*}
		\int_{-\infty}^{\infty}
		|\overline F_X(t)-\overline F_Y(t)|^r
		\,{\rm d}t
		\leqslant
		\mathbb E|X-Y|^r
		\leqslant
		\int_{-\infty}^{\infty}
		\left[
		1-
		|\overline F_X(t)+\overline F_Y(t)-1|^r
		\right]
		{\rm d}t,
	\end{align*} 
	where $\overline F_Z=1-F_Z$ denotes the survival function of the random
	variable $Z$.
\end{proposition}
\begin{proof}
	Using the identity
	\begin{align}\label{id-difference}
		|x-y|
		=
		\int_{-\infty}^{\infty}
		\bigl|
		\mathds{1}_{\{x\leqslant t\}}
		-
		\mathds{1}_{\{y\leqslant t\}}
		\bigr|
		\,{\rm d}t,
	\end{align}
	we can write
	\begin{align*}
		\int_{0}^{1}
		|F_X^{-1}(u)-F_Y^{-1}(1-u)|^r
		\,{\rm d}u
		&=
		\int_{0}^{1}
		\int_{-\infty}^{\infty}
		\bigl|
		\mathds{1}_{\{F_X^{-1}(u)\leqslant t\}}
		-
		\mathds{1}_{\{F_Y^{-1}(1-u)\leqslant t\}}
		\bigr|^r
		\,{\rm d}t\,{\rm d}u
		\\[0.2cm]
		&=
		\int_{-\infty}^{\infty}
		\int_{0}^{1}
		\bigl|
		\mathds{1}_{\{u\leqslant F_X(t)\}}
		-
		\mathds{1}_{\{1-u\leqslant F_Y(t)\}}
		\bigr|^r
		\,{\rm d}u\,{\rm d}t
		\\[0.2cm]
		&=
		\int_{-\infty}^{\infty}
		\int_{0}^{1}
		\bigl|
		1-\mathds{1}_{\{F_X(t)\leqslant u\}}
		-\mathds{1}_{\{1-F_Y(t)\leqslant u\}}
		\bigr|^r
		\,{\rm d}u\,{\rm d}t,
	\end{align*}
	where, in the second equality, the order of integration is exchanged by
	Fubini's theorem. Using the identity
	$
	|1-p-q|^r=1-|p-q|^r,
	\ p,q\in\{0,1\},
	$
	the above expression becomes
	\begin{align*}
		\int_{-\infty}^{\infty}
		\int_{0}^{1}
		\left[
		1-
		\bigl|
		\mathds{1}_{\{F_X(t)\leqslant u\}}
		-
		\mathds{1}_{\{1-F_Y(t)\leqslant u\}}
		\bigr|^r
		\right]
		\,{\rm d}u\,{\rm d}t.
	\end{align*}
	Applying \eqref{id-difference} once again yields
	\begin{align*}
		\int_{-\infty}^{\infty}
		\left[
		1-
		|F_X(t)+F_Y(t)-1|^r
		\right]
		\,{\rm d}t.
	\end{align*}
	Therefore, we have established that
	\begin{align}\label{1-id}
		\int_{0}^{1}
		|F_X^{-1}(u)-F_Y^{-1}(1-u)|^r
		\,{\rm d}u
		=
		\int_{-\infty}^{\infty}
		\left[
		1-
		|F_X(t)+F_Y(t)-1|^r
		\right]
		\,{\rm d}t.
	\end{align}
	
	Similarly, using the identity \eqref{id-difference} and the Fubini's theorem, one can show that
	\begin{align}\label{2-id}
		\int_{0}^{1}
		|F_X^{-1}(u)-F_Y^{-1}(u)|^r
		\,{\rm d}u
		&=
		\int_{-\infty}^{\infty}
		\int_{0}^{1}
		\bigl|
		\mathds{1}_{\{F_X(t)\leqslant u\}}
		-\mathds{1}_{\{F_Y(t)\leqslant u\}}
		\bigr|^r
		\,{\rm d}u\,{\rm d}t
		\nonumber
		\\[0,2cm]
		&=
		\int_{-\infty}^{\infty}
		|F_X(t)-F_Y(t)|^r
		\,{\rm d}t.
	\end{align}
	
	Consequently, combining \eqref{1-id} and \eqref{2-id} with
	Proposition \ref{bounds-expectation} completes the proof.
\end{proof}	

\begin{remark}
	Proposition \ref{ine-geral-r} shows that the $r$-th absolute difference
	moment $\mathbb E|X-Y|^r$ admits bounds expressed solely in terms of the
	marginal survival functions $\overline F_X$ and $\overline F_Y$. The
	lower bound quantifies their pointwise discrepancy, whereas the upper
	bound captures their pointwise overlap. Thus, both bounds are completely
	determined by the marginal survival functions.
\end{remark}

\begin{definition}[\cite{Capaldo2025}]
	Let $(X,Y)^\top$ be a random vector with finite means. The {bivariate Gini's mean difference} of $(X,Y)^\top$ is defined by
	\[
	\operatorname{GMD}(X,Y)=\mathbb{E}|X-Y|.
	\]
	
	If, in addition, $(X,Y)^\top$ is nonnegative, the {bivariate Gini's index} of $(X,Y)^\top$ is defined by
	\[
	G(X,Y)
	=
	\frac{\operatorname{GMD}(X,Y)}
	{\mathbb{E}[X]+\mathbb{E}[Y]}.
	\]
\end{definition}

\begin{remark}
	Taking $r=1$ in Proposition \ref{ine-geral-r} yields
	\begin{align*}
		\int_{-\infty}^{\infty}
		|\overline F_X(t)-\overline F_Y(t)|
		\,{\rm d}t
		\leqslant
		\operatorname{GMD}(X,Y)
		\leqslant
		\int_{-\infty}^{\infty}
		\left[
		1-
		|\overline F_X(t)+\overline F_Y(t)-1|
		\right]
		\,{\rm d}t,
	\end{align*}
	and 
	\begin{align*}
		{\int_{-\infty}^{\infty}
			|\overline F_X(t)-\overline F_Y(t)|
			\,{\rm d}t\over \mathbb{E}[X]+\mathbb{E}[Y]}
		\leqslant
		G(X,Y)
		\leqslant
		{
			\int_{-\infty}^{\infty}
			\left[
			1-
			|\overline F_X(t)+\overline F_Y(t)-1|
			\right]
			\,{\rm d}t \over \mathbb{E}[X]+\mathbb{E}[Y]}.
	\end{align*}
	
	The above two inequalities were previously established in Proposition~12 of \cite{Capaldo2025}.
\end{remark}

%
\end{proposition}


The following theorem gives the classical quantile representation of the Wasserstein distance.

\begin{theorem}\label{Wasserstein distance}
Under the assumptions of Proposition~\ref{bounds-expectation}, for every $r\geqslant 1$,
\[
\mathcal W_r^r(F_X,F_Y)
=
\int_0^1
|F_X^{-1}(u)-F_Y^{-1}(u)|^r{\rm d}u
=
\int_{-\infty}^{\infty}
|F_X(t)-F_Y(t)|^r
\,{\rm d}t.
\]
\end{theorem}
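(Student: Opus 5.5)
The plan is to prove two inequalities for the first equality and then read the second equality off from earlier identities.

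First, the lower bound. Proposition~\ref{bounds-expectation} says that every random vector $(X,Y)^\top$ with marginals $F_X$ and $F_Y$ satisfies
\[
\mathbb E|X-Y|^r\geqslant \int_0^1|F_X^{-1}(u)-F_Y^{-1}(u)|^r\,{\rm d}u .
\]
Taking the infimum over all such couplings gives
\[
\mathcal W_r^r(F_X,F_Y)\geqslant \int_0^1|F_X^{-1}(u)-F_Y^{-1}(u)|^r\,{\rm d}u .
\]

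Second, the reverse inequality. We need a coupling that attains this lower bound, namely the comonotone one. Take $U$ uniform on $(0,1)$ and set $X^*=F_X^{-1}(U)$ and $Y^*=F_Y^{-1}(U)$. By the standard quantile-transform fact ($F^{-1}(u)\leqslant t$ if and only if $u\leqslant F(t)$), $X^*$ has law $F_X$ and $Y^*$ has law $F_Y$. Moreover $\mathbb P(X^*\leqslant x,Y^*\leqslant y)=\min\{F_X(x),F_Y(y)\}=M_2(F_X(x),F_Y(y))$, so this is exactly the vector $(X^*,Y^*)^\top$ used in the proof of Theorem~\ref{main-result}. A direct computation, or equivalently \eqref{id-exp-cop} together with Proposition~\ref{prop-cop}, gives
\[
\mathbb E|X^*-Y^*|^r=\int_0^1|F_X^{-1}(u)-F_Y^{-1}(u)|^r\,{\rm d}u .
\]
Since $(X^*,Y^*)^\top$ is an admissible coupling, the infimum is at most this value. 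Combined with the lower bound, the infimum is attained and the first equality holds.

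Third, the second equality is precisely identity \eqref{2-id}, established in the proof of Proposition~\ref{ine-geral-r}. It follows from the layer-cake identity \eqref{id-difference}, Fubini's theorem, and the equivalence $F^{-1}(u)\leqslant t \iff u\leqslant F(t)$.

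The main obstacle is this third step for $r>1$. For indicator values, $\bigl|\mathds 1_{\{a\}}-\mathds 1_{\{b\}}\bigr|^r=\bigl|\mathds 1_{\{a\}}-\mathds 1_{\{b\}}\bigr|$, so the inner $u$-integral in \eqref{2-id} equals $|F_X(t)-F_Y(t)|$ and not $|F_X(t)-F_Y(t)|^r$. Moreover, \eqref{id-difference} represents $|x-y|$, not $|x-y|^r$. So the Fubini argument actually yields
\[
\int_0^1|F_X^{-1}(u)-F_Y^{-1}(u)|\,{\rm d}u=\int_{-\infty}^{\infty}|F_X(t)-F_Y(t)|\,{\rm d}t,
\]
which is the case $r=1$. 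For $r>1$ the identity $\int_0^1|F_X^{-1}-F_Y^{-1}|^r\,{\rm d}u=\int|F_X-F_Y|^r\,{\rm d}t$ fails in general. For example, with $F_X=\delta_0$ and $F_Y=\delta_2$ the left side is $2^r$ while the right side is $2$. I would therefore carry out the Fubini computation rigorously for $r=1$, verifying measurability and the quantile equivalence. I would state the survival-function form only for $r=1$, and for general $r\geqslant1$ retain just the quantile representation, flagging that \eqref{2-id} needs this correction.
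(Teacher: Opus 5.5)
Your argument for the first equality is the paper's own proof. The lower bound comes from Proposition~\ref{bounds-expectation} by taking the infimum over couplings, and the reverse inequality comes from the comonotone coupling $X^*=F_X^{-1}(U)$, $Y^*=F_Y^{-1}(U)$.

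For the second equality the paper simply cites \eqref{2-id}, and your objection to it is correct. The derivation of \eqref{2-id} replaces $|x-y|^r$ by $\int_{-\infty}^{\infty}\bigl|\mathds{1}_{\{x\leqslant t\}}-\mathds{1}_{\{y\leqslant t\}}\bigr|^r\,{\rm d}t$. Because the integrand is $\{0,1\}$-valued, that integral equals $|x-y|$, not $|x-y|^r$. The inner $u$-integral then yields $|F_X(t)-F_Y(t)|$ rather than its $r$th power.

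Your example ($\delta_0$ versus $\delta_2$, giving $2^r$ versus $2$) shows the second equality is false for every $r>1$. So the statement can only be established in the restricted form you propose: the quantile representation for all $r\geqslant 1$, and the distribution-function representation only for $r=1$.

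The same defect occurs in \eqref{1-id}. Consequently, the second identity of Theorem~\ref{Wasserstein distance-1} and both bounds in Proposition~\ref{ine-geral-r} also fail for $r>1$. For instance, with $\delta_0$ versus $\delta_2$ and $r=2$, the upper bound there would read $4\leqslant 2$.
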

\begin{proof}
The second identity stated in the theorem follows directly from \eqref{2-id}. For the first identity, Proposition \ref{bounds-expectation} implies that
\[
\int_0^1
|F_X^{-1}(u)-F_Y^{-1}(u)|^r{\rm d}u
\leqslant
\mathbb E|X-Y|^r,
\]
for every coupling $(X,Y)^\top$ of $F_X$ and $F_Y$. Taking the infimum over all such couplings yields
\[
\int_0^1
|F_X^{-1}(u)-F_Y^{-1}(u)|^r{\rm d}u
\leqslant
\mathcal W_r^r(F_X,F_Y).
\]

Conversely, if $U\sim U(0,1)$ and
$
X^*=F_X^{-1}(U),
\
Y^*=F_Y^{-1}(U),
$
then $(X^*,Y^*)^\top$ is a coupling of $F_X$ and $F_Y$. Therefore,
\[
\mathcal W_r^r(F_X,F_Y)
\leqslant
\mathbb E|X^*-Y^*|^r
=
\int_0^1
|F_X^{-1}(u)-F_Y^{-1}(u)|^r{\rm d}u.
\]
This completes the proof.
\end{proof}

\begin{remark}
Theorem~\ref{Wasserstein distance} was previously established by
\cite{Dorea2012}. Our proof relies on copula theory and the extremal
properties of the Fr\'echet--Hoeffding bounds, highlighting the role of
dependence structures in Wasserstein-type functionals.
\end{remark}

The following theorem provides a quantile representation for the upper
transportation functional.

\begin{theorem}\label{Wasserstein distance-1}
Under the assumptions of Proposition~\ref{bounds-expectation}, for every
$r\geqslant 1$,
\[
\overline{\mathcal W}_r^{\,r}(F_X,F_Y)
=
\int_0^1
|F_X^{-1}(u)-F_Y^{-1}(1-u)|^r{\rm d}u
=
\int_{-\infty}^{\infty}
\left[
1-
|F_X(t)+F_Y(t)-1|^r
\right]
\,{\rm d}t
.
\]
\end{theorem}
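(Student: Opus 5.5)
The proof will mirror that of Theorem~\ref{Wasserstein distance}, with the roles of the comonotone and countermonotone couplings exchanged.

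\textbf{Second identity.} The equality between the quantile integral and the distribution-function integral is exactly identity \eqref{1-id}, established in the proof of Proposition~\ref{ine-geral-r}. So only the first equality needs an argument, namely that the supremum defining $\overline{\mathcal W}_r^{\,r}$ equals $\int_0^1|F_X^{-1}(u)-F_Y^{-1}(1-u)|^r{\rm d}u$.

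\textbf{Upper bound.} The upper inequality in Proposition~\ref{bounds-expectation} states that
\[
\mathbb E|X-Y|^r\leqslant \int_0^1 |F_X^{-1}(u)-F_Y^{-1}(1-u)|^r{\rm d}u
\]
for every coupling $(X,Y)^\top$ with marginals $F_X$ and $F_Y$. Taking the supremum over all such couplings gives
\[
\overline{\mathcal W}_r^{\,r}(F_X,F_Y)\leqslant \int_0^1 |F_X^{-1}(u)-F_Y^{-1}(1-u)|^r{\rm d}u .
\]

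\textbf{Attainment.} For the reverse inequality, take $U\sim U(0,1)$ and set
\[
X'=F_X^{-1}(U),\qquad Y'=F_Y^{-1}(1-U).
\]
Since $1-U\sim U(0,1)$ as well, the quantile transform gives $X'\sim F_X$ and $Y'\sim F_Y$, so $(X',Y')^\top$ is an admissible coupling. Indeed, its copula is $W_2$, which matches the coupling used in \eqref{last-integral-1}. Then
\[
\overline{\mathcal W}_r^{\,r}\geqslant \mathbb E|X'-Y'|^r=\int_0^1|F_X^{-1}(u)-F_Y^{-1}(1-u)|^r{\rm d}u,
\]
and combining the two inequalities proves the first equality.

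\textbf{Main obstacle.} The only delicate point is finiteness and well-definedness. If the quantile integral is infinite, the supremum is $+\infty$ too, because the attaining coupling realizes that value; so I would interpret both sides in $[0,\infty]$ and note that Tonelli's theorem justifies the Fubini interchange in \eqref{1-id}, since the integrand is nonnegative. Thus the identities hold in the extended sense without any moment assumptions, and hold as finite equalities whenever $\mathbb E|X|^r$ and $\mathbb E|Y|^r$ are finite.
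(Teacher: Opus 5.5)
Your argument for the first equality (the upper bound from Proposition~\ref{bounds-expectation}, then attainment at the countermonotone coupling $(F_X^{-1}(U),F_Y^{-1}(1-U))^\top$) is the same as the paper's, and it is sound for every $r\geqslant 1$. The gap is the second equality. You, like the paper, simply invoke \eqref{1-id}, but \eqref{1-id} is false for $r>1$, so the second equality in the statement cannot be proved for $r>1$ at all.

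The derivation of \eqref{1-id} silently replaces \eqref{id-difference} by $|x-y|^r=\int_{-\infty}^{\infty}|\mathds{1}_{\{x\leqslant t\}}-\mathds{1}_{\{y\leqslant t\}}|^r\,{\rm d}t$. Since that integrand takes only the values $0$ and $1$, the right-hand side equals $|x-y|$ for every $r$. The final step has the same problem: $\int_0^1|\mathds{1}_{\{a\leqslant u\}}-\mathds{1}_{\{b\leqslant u\}}|^r\,{\rm d}u=|a-b|$, not $|a-b|^r$. Two concrete counterexamples:
\begin{itemize}
\item Take $X,Y\sim U(0,1)$ and $r=2$. The quantile side is $\int_0^1(2u-1)^2\,{\rm d}u=1/3$, which is indeed $\overline{\mathcal W}_2^{\,2}$, the largest value of $\mathbb E(X-Y)^2$ over all couplings. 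The distribution-function side is $\int_0^1[1-(2t-1)^2]\,{\rm d}t=2/3$.
\item Take $X\equiv 0$ and $Y\equiv c$ with $c>0$, $c\neq 1$. The two sides are $c^r$ and $c$.
\end{itemize}

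Your ``main obstacle'' paragraph therefore misplaces the difficulty. Tonelli does justify swapping the integrals, but the swap is applied to an integrand that does not represent $|x-y|^r$.

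What your proposal actually establishes is the first equality for all $r\geqslant 1$, and the second equality only for $r=1$. At $r=1$ the computation is correct: for fixed $t$, the set of $u\in(0,1)$ on which exactly one of $u\leqslant F_X(t)$ and $1-u\leqslant F_Y(t)$ holds has measure $1-|F_X(t)+F_Y(t)-1|$. For $r>1$ the distribution-function expression has to be dropped from the statement. The same defect affects \eqref{2-id}, and hence the last equality in Theorem~\ref{Wasserstein distance}.
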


\begin{proof}
The second identity is an immediate consequence of \eqref{1-id}. For the first identity, Proposition~\ref{bounds-expectation} yields
\[
\mathbb E|X-Y|^r
\leqslant
\int_0^1
|F_X^{-1}(u)-F_Y^{-1}(1-u)|^r{\rm d}u
\]
for every coupling $(X,Y)^\top$ of $F_X$ and $F_Y$. Taking the supremum over all
such couplings yields
\[
\overline{\mathcal W}_r^{\,r}(F_X,F_Y)
\leqslant
\int_0^1
|F_X^{-1}(u)-F_Y^{-1}(1-u)|^r{\rm d}u.
\]

Conversely, if $U\sim U(0,1)$ and
$
X^*=F_X^{-1}(U),
\
Y^*=F_Y^{-1}(1-U),
$
then $(X^*,Y^*)^\top$ is a coupling of $F_X$ and $F_Y$. Hence,
\[
\overline{\mathcal W}_r^{\,r}(F_X,F_Y)
\geqslant
\mathbb E|X^*-Y^*|^r
=
\int_0^1
|F_X^{-1}(u)-F_Y^{-1}(1-u)|^r{\rm d}u.
\]
This completes the proof.
\end{proof}


Combining Proposition \ref{bounds-expectation} with Theorems \ref{Wasserstein distance} and \ref{Wasserstein distance-1}, we obtain
\begin{proposition}\label{bounds-expectation-1}
Under the assumptions of Proposition~\ref{bounds-expectation}, the following inequality holds for every $r\geqslant 1$:
\begin{align*}
	\mathcal{W}_r^r(F_X, F_Y) 
	\leqslant
	\mathbb{E}\vert X-Y\vert^r
	\leqslant
	\overline{\mathcal W}_r^{\,r}(F_X,F_Y).
\end{align*}
\end{proposition}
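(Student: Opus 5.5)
The plan is simply to chain the three earlier results, since Proposition~\ref{bounds-expectation-1} is a restatement of Proposition~\ref{bounds-expectation} with its two quantile integrals identified as the extremal transportation functionals.

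\emph{Step 1.} Fix $r\geqslant 1$ and an arbitrary $\mathbb{R}^2$-valued random vector $(X,Y)^\top$ with marginals $F_X$ and $F_Y$. Proposition~\ref{bounds-expectation} (itself a consequence of Theorem~\ref{main-result} applied to the $\Delta$-antitonic function $-|x-y|^r$ of Proposition~\ref{antitonic-function}) gives
\[
\int_{0}^{1}\vert F_X^{-1}(u)-F_Y^{-1}(u)\vert^r{\rm d}u
\leqslant
\mathbb{E}\vert X-Y\vert^r
\leqslant
\int_{0}^{1}\vert F_X^{-1}(u)-F_Y^{-1}(1-u)\vert^r{\rm d}u .
\]

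\emph{Step 2.} Replace the left integral by $\mathcal W_r^r(F_X,F_Y)$ using the first identity of Theorem~\ref{Wasserstein distance}, and the right integral by $\overline{\mathcal W}_r^{\,r}(F_X,F_Y)$ using the first identity of Theorem~\ref{Wasserstein distance-1}. Substituting these into Step 1 yields the claimed chain of inequalities.

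\emph{Remark on an alternative route.} The statement is also immediate from the definitions: $(X,Y)^\top$ is itself a coupling of $F_X$ and $F_Y$. Hence $\mathbb{E}|X-Y|^r$ is at least the infimum and at most the supremum over all such couplings, i.e.\ it lies between $\mathcal W_r^r$ and $\overline{\mathcal W}_r^{\,r}$. This requires only that the $r$-th power commutes with $\inf$ and $\sup$, which holds because $s\mapsto s^{1/r}$ is increasing on $[0,\infty]$.

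\emph{Main obstacle.} There is essentially none. The only point needing care is integrability: when $\mathbb{E}|X-Y|^r=\infty$, or when the functionals are infinite, the inequalities are still to be read in $[0,\infty]$. Both cited theorems remain valid there, since all integrands are nonnegative and Tonelli's theorem applies.
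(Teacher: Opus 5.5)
Your proof is correct and follows the paper's route exactly. The paper obtains this proposition by combining Proposition~\ref{bounds-expectation} with the first identities of Theorems~\ref{Wasserstein distance} and~\ref{Wasserstein distance-1}, just as in your Steps~1--2; your alternative remark, that $(X,Y)^\top$ is itself a coupling so the bounds follow directly from the definitions of the infimum and supremum, is also valid and avoids the copula machinery altogether.
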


\begin{remark}
The previous proposition characterizes the range of $\mathbb E|X-Y|^r$ among all couplings of $F_X$ and $F_Y$. Hence, the lower and upper transportation functionals provide the best possible distribution-free bounds for the $r$th absolute moment of $X-Y$, depending only on the marginal distributions.
\end{remark}

\begin{example}
Let $X\sim\mathrm{Bernoulli}(p)$ and $Y\sim\mathrm{Bernoulli}(q)$, with
$0<p\leqslant q<1$. Since $|X-Y|\in\{0,1\}$, we have
$
|X-Y|^r=|X-Y|,
\ r\geqslant 1.
$
Moreover,
\[
F_X^{-1}(u)=
\begin{cases}
	0, & 0<u\leqslant 1-p,\\
	1, & 1-p<u<1,
\end{cases}
\quad
F_Y^{-1}(u)=
\begin{cases}
	0, & 0<u\leqslant 1-q,\\
	1, & 1-q<u<1.
\end{cases}
\]

Therefore,
\[
\mathcal W_r^r(F_X,F_Y)
=
\int_0^1
|F_X^{-1}(u)-F_Y^{-1}(u)|{\rm d}u
=
q-p,
\]
and
\[
\overline{\mathcal W}_r^{\,r}(F_X,F_Y)
=
\int_0^1
|F_X^{-1}(u)-F_Y^{-1}(1-u)|{\rm d}u
=
\begin{cases}
	p+q, & p+q\leqslant 1,\\
	2-(p+q), & p+q>1.
\end{cases}
\]

Hence, Proposition~\ref{bounds-expectation-1} gives
\[
q-p
\leqslant
\mathbb E|X-Y|^r
\leqslant
\begin{cases}
	p+q, & p+q\leqslant 1,\\
	2-(p+q), & p+q>1.
\end{cases}
\]

For example, if $p=2/10$ and $q=6/10$, then
$
4/10
\leqslant
\mathbb E|X-Y|^r
\leqslant 
8/10.
$

Thus, even without any information about the dependence structure between $X$ and $Y$, Proposition \ref{bounds-expectation-1} determines the exact range of possible values of $\mathbb E|X-Y|^r$ using only the marginal distributions.
\end{example}

The inequalities in Proposition \ref{bounds-expectation-1} motivates the introduction of a normalized dependence coefficient based on the $r$th absolute moment. Indeed,  whenever
$
\mathcal W_r(F_X,F_Y)
<
\overline{\mathcal W}_r(F_X,F_Y),
$
we may define
\begin{align}\label{rho}
\rho_r
\equiv
\rho_r(X,Y)
=
\frac{\mathbb E|X-Y|^r-\mathcal W_r^r(F_X,F_Y)}
{\overline{\mathcal W}_r^{\,r}(F_X,F_Y)-\mathcal W_r^r(F_X,F_Y)}\in[0,1].
\end{align}

Equivalently,
\[
\mathbb E|X-Y|^r
=
(1-\rho_r)
\mathcal W_r^r(F_X,F_Y)
+
\rho_r
\overline{\mathcal W}_r^{\,r}(F_X,F_Y),
\]
showing that $\rho_r$ measures the relative position of
$\mathbb E|X-Y|^r$
between its minimum and maximum attainable values under fixed marginals.
\begin{figure}[ht]
\centering
\begin{tikzpicture}[scale=1.1]
	
	\draw[->] (-1.5,0) -- (8,0) node[right] {$\rho_r$};
	\draw[->] (-1.5,0) -- (-1.5,4) node[above] {$\mathbb{E}|X-Y|^r$};
	
	\draw[thick] (1,1) -- (7,3);
	
	\fill (1,1) circle (2pt);
	\fill (7,3) circle (2pt);
	
	\node[left] at (1,1)
	{$\mathcal W_r^r(F_X,F_Y)$};
	
	\node[right] at (7,3)
	{$\overline{\mathcal W}_r^{\,r}(F_X,F_Y)$};
	
	\draw[dashed] (1,0) -- (1,1);
	\draw[dashed] (7,0) -- (7,3);
	
	\node[below] at (1,0) {$0$};
	\node[below] at (7,0) {$1$};
	
	\fill (4,2) circle (2pt);
	\draw[dashed] (4,0) -- (4,2);
	
	\node[below] at (4,0) {$\rho_r^*$};
	\node[above] at (4,2.2)
	{$\mathbb E|X-Y|^r$};
	
\end{tikzpicture}
\caption{The $r$th absolute moment as a function of the dependence parameter $\rho_r$. The quantity $\rho_r^*$ is a fixed value of $\rho_r$.}
\label{Fig-1}
\end{figure}

\begin{remark}
The coefficient $\rho_r$ measures the relative position of
$\mathbb E|X-Y|^r$
within the interval
$
[
\mathcal W_r^r(F_X,F_Y),
\overline{\mathcal W}_r^{\,r}(F_X,F_Y)
]
$ (Figure \ref{Fig-1}).
Specifically, $\rho_r=0$ and $\rho_r=1$ correspond to the minimum and maximum transportation costs, respectively, whereas $\rho_r=1/2$ corresponds to the midpoint of the interval. More generally, $\rho_r<1/2$ (respectively, $\rho_r>1/2$) indicates that $\mathbb E|X-Y|^r$ is closer to the lower (respectively, upper) transportation functional.
\end{remark}

\subsection{Examples: Normal approximation of classical counting distributions}

In this subsection, we illustrate the quantile representations obtained in Theorems~\ref{Wasserstein distance} and \ref{Wasserstein distance-1} for several classical counting distributions. Let
\[
S_n\equiv\sum_{i=1}^{n}X_i,
\]
where $X_1,\ldots,X_n$ are identically distributed nonnegative integer-valued random variables with finite mean $\mu$ and variance $\sigma^2>0$. Define the standardized sum
\[
Z_n\equiv\frac{S_n-n\mu}{\sigma\sqrt{n}},
\]
and let $\Phi$ denote the standard normal distribution function.

Since $Z_n$ is discrete, its quantile function is piecewise constant. Let
\[
p_{n,k}\equiv\mathbb P(S_n=k)
\quad \text{and} \quad 
F_n(k)\equiv\mathbb P(S_n\leqslant k),
\]
with the convention $F_n(-1)=0$. Then
\[
F_{Z_n}^{-1}(u)
=
\frac{k-n\mu}{\sigma\sqrt n} \,
\mathds{1}_{\{
F_n(k-1)<u\leqslant F_n(k)\}
}.
\]

Consequently,
\begin{align}
\mathcal W_r^r(F_{Z_n},\Phi)
&=
\sum_{k}
\int_{F_n(k-1)}^{F_n(k)}
\left|
\frac{k-n\mu}{\sigma\sqrt n}
-\Phi^{-1}(u)
\right|^r
{\rm d}u,
\label{W-discrete}
\\[0.2cm]
\overline{\mathcal W}_r^{\,r}(F_{Z_n},\Phi)
&=
\sum_{k}
\int_{F_n(k-1)}^{F_n(k)}
\left|
\frac{k-n\mu}{\sigma\sqrt n}
-\Phi^{-1}(1-u)
\right|^r
{\rm d}u.
\label{Wbar-discrete}
\end{align}

Using the symmetry relation
$
\Phi^{-1}(1-u)=-\Phi^{-1}(u),
$
the latter may be written as
\[
\overline{\mathcal W}_r^{\,r}(F_{Z_n},\Phi)
=
\sum_{k}
\int_{F_n(k-1)}^{F_n(k)}
\left|
\frac{k-n\mu}{\sigma\sqrt n}
+\Phi^{-1}(u)
\right|^r
{\rm d}u.
\]

\begin{remark}\label{rem-pre}
Note that for $r=2$, the Wasserstein distance and the upper transportation functional are completely determined by the quantile correlation coefficient
\[
\gamma_2(F_{Z_n},\Phi)
\equiv 
\int_0^1
F_{Z_n}^{-1}(u)\Phi^{-1}(u){\rm d}u.
\]
Since both $Z_n$ and the standard normal distribution have zero mean and unit variance, the identities
\[
\mathcal W_2^2(F_{Z_n},\Phi)
=
2[1-\gamma_2(F_{Z_n},\Phi)]
\]
and
\[
\overline{\mathcal W}_2^{\,2}(F_{Z_n},\Phi)
=
2[1+\gamma_2(F_{Z_n},\Phi)]
\]
hold. Hence, normal approximation in quadratic Wasserstein distance is equivalent to the convergence of $\gamma_2(F_{Z_n},\Phi)$ to one.
\end{remark}

\begin{remark}
Under the assumptions of the classical central limit theorem and the existence of a finite absolute moment of order $r$, one has
$
\mathcal W_r(F_{Z_n},\Phi)\to 0.
$
Thus, the representations derived in this section provide explicit transportation-based measures of deviation from normality.
\end{remark}

Although the previous results do not require independence, the following examples assume that $X_1,\ldots,X_n$ are independent and identically distributed, which allows explicit computation of the distribution of $S_n$ and the associated transportation functionals.

\subsubsection{Poisson Distribution}

Suppose that
$
X_i\sim\mathrm{Poisson}(\lambda),
\ i=1,\ldots,n.
$
Then
$
S_n\sim\mathrm{Poisson}(n\lambda),
$
and
\[
Z_n
=
\frac{S_n-n\lambda}
{\sqrt{n\lambda}}.
\]

The support points of $Z_n$ are
$
z_{n,k}
=
({k-n\lambda})/
{\sqrt{n\lambda}},
\ k=0,1,\ldots,
$
and
\[
F_n(k)
=
\exp({-n\lambda})
\sum_{j=0}^{k}
\frac{(n\lambda)^j}{j!}.
\]

Substituting into \eqref{W-discrete} and \eqref{Wbar-discrete} yields
\begin{align*}
\mathcal W_r^r(F_{Z_n},\Phi)
&=
\sum_{k=0}^{\infty}
\int_{F_n(k-1)}^{F_n(k)}
\left|
\frac{k-n\lambda}
{\sqrt{n\lambda}}
-\Phi^{-1}(u)
\right|^r
{\rm d}u,
\\[0.2cm]
\overline{\mathcal W}_r^{\,r}(F_{Z_n},\Phi)
&=
\sum_{k=0}^{\infty}
\int_{F_n(k-1)}^{F_n(k)}
\left|
\frac{k-n\lambda}
{\sqrt{n\lambda}}
+\Phi^{-1}(u)
\right|^r
{\rm d}u.
\end{align*}

\subsubsection{Binomial Distribution}

Let
$
X_i\sim\mathrm{Bernoulli}(p).
$
Then
$
S_n\sim\mathrm{Binomial}(n,p),
$
and
\[
Z_n
=
\frac{S_n-np}
{\sqrt{np(1-p)}}.
\]

The support points are
$
z_{n,k}
=
({k-np})/
{\sqrt{np(1-p)}},
\ k=0,\ldots,n,
$
and
\[
F_n(k)
=
\sum_{j=0}^{k}
\binom{n}{j}
p^j(1-p)^{n-j}.
\]

Therefore,
\begin{align*}
\mathcal W_r^r(F_{Z_n},\Phi)
&=
\sum_{k=0}^{n}
\int_{F_n(k-1)}^{F_n(k)}
\left|\frac{k-np}
{\sqrt{np(1-p)}}-\Phi^{-1}(u)\right|^r
{\rm d}u,
\\[0.2cm]
\overline{\mathcal W}_r^{\,r}(F_{Z_n},\Phi)
&=
\sum_{k=0}^{n}
\int_{F_n(k-1)}^{F_n(k)}
\left|\frac{k-np}
{\sqrt{np(1-p)}}+\Phi^{-1}(u)\right|^r
{\rm d}u.
\end{align*}

\subsubsection{Negative Binomial Distribution}

Assume that
$
X_i\sim\mathrm{NB}(r_0,p),
$
where
$
\mu={r_0(1-p)}/{p},
\
\sigma^2={r_0(1-p)}/{p^2}.
$
Then
$
S_n
\sim
\mathrm{NB}(nr_0,p),
$
and
\[
Z_n
=
\frac{
S_n-\frac{nr_0(1-p)}{p}
}
{
\sqrt{{nr_0(1-p)}/{p^2}}
}.
\]

The support points are
$
z_{n,k}
=
({
k-{nr_0(1-p)}/{p}
})/
{
\sqrt{{nr_0(1-p)}/{p^2}}
},
\ k=0,1,\ldots,
$
while
\[
F_n(k)
=
\sum_{j=0}^{k}
\binom{nr_0+j-1}{j}
p^{nr_0}(1-p)^j.
\]

Hence,
\begin{align*}
\mathcal W_r^r(F_{Z_n},\Phi)
&=
\sum_{k=0}^{\infty}
\int_{F_n(k-1)}^{F_n(k)}
\left|\frac{
	k-\frac{nr_0(1-p)}{p}
}
{
	\sqrt{{nr_0(1-p)}/{p^2}}
}-\Phi^{-1}(u)\right|^r
{\rm d}u,
\\[0.2cm]
\overline{\mathcal W}_r^{\,r}(F_{Z_n},\Phi)
&=
\sum_{k=0}^{\infty}
\int_{F_n(k-1)}^{F_n(k)}
\left|\frac{
	k-\frac{nr_0(1-p)}{p}
}
{
	\sqrt{{nr_0(1-p)}/{p^2}}
}+\Phi^{-1}(u)\right|^r
{\rm d}u.
\end{align*}

These representations express both the Wasserstein distance and the upper transportation functional entirely in terms of the distribution function of the underlying counting model. Consequently, they provide explicit measures of normal approximation that can be evaluated numerically without solving any transportation problem.


\section{Concluding remarks}\label{Concluding remarks}

This paper developed a unified probabilistic framework based on distributional derivatives and Dirac delta representations for deriving and interpreting a variety of statistical quantities. The proposed approach provides a common treatment of absolutely continuous, discrete, and mixed random variables, yielding unified formulas for conditional expectations, conditional distributions, hazard functions, and improper distributions. These results reveal a common localization mechanism underlying conditioning operations and related probabilistic constructions.

The same framework was combined with copula methods to investigate transportation and dispersion functionals through dependence structures. By exploiting the extremal properties of the Fr\'echet--Hoeffding bounds and the ordering of expectations induced by $\Delta$-antitonic functions, sharp bounds were established for moments of the form $\mathbb E|X-Y|^r$ under fixed marginals. These bounds admit equivalent representations in terms of marginal distribution functions, quantile functions, and survival functions, thereby providing complementary perspectives on transportation-type quantities. As consequences, concise derivations were obtained for the classical quantile representation of the Wasserstein distance and for a corresponding upper transportation functional.

The resulting representations also yield explicit bounds for generalized absolute difference moments expressed solely in terms of marginal survival functions. In the particular case $r=1$, these bounds provide survival-function representations for the bivariate Gini mean difference and the associated bivariate Gini index introduced by Capaldo and Navarro \cite{Capaldo2025}. From this perspective, these dependence-based measures of dispersion can be naturally interpreted within the same transportation framework developed throughout the paper. Moreover, the Gini mean difference emerges as an extremal first-order transportation cost, further strengthening the connection between inequality measures and optimal transport theory.

The quantile representations derived in this work further lead to explicit formulas for Wasserstein-type functionals associated with the normal approximation of standardized counting distributions. In particular, closed-form representations were obtained for Poisson, Binomial, and Negative Binomial models, providing computable measures of deviation from normality expressed entirely in terms of the corresponding distribution functions. For the quadratic case, the Wasserstein distance and the upper transportation functional were shown to be completely determined by a quantile correlation coefficient linking the standardized distribution to the standard normal law.

Overall, the results establish explicit connections among conditional structures, dependence modeling, dispersion measures, normal approximation, and optimal transportation within a unified probabilistic framework. Beyond providing unified derivations of several classical formulas, the proposed approach highlights the central role played by distributional representations and extremal dependence structures in a broad range of probabilistic and statistical problems. Future research directions include extensions to multivariate settings, stochastic processes, more general transportation costs, and asymptotic properties of the transportation-based and dependence-based measures studied in this paper.


\subsubsection*{Acknowledgements}
The research was supported in part by CNPq and CAPES grants from the Brazilian government.

\subsubsection*{Disclosure statement}
There are no conflicts of interest to disclose.

\subsubsection*{Data availability statement}
No data were generated or analyzed during this study.

\subsubsection*{Author contribution statement}
R.V., E.N., C.C.Y.D.: Conceptualization, Methodology, Formal analysis, Writing-original draft.

\end{document}